\newtheorem*{thm*}{Theorem}
\newtheorem{thm}{Theorem}[section]
\newtheorem{lemma}[thm]{Lemma}
\newtheorem{conjecture}[thm]{Conjecture}
\newtheorem*{prop*}{Proposition}
\newtheorem{corollary}[thm]{Corollary}
\newtheorem{question}[thm]{Question}
\newtheorem{claim}[thm]{Claim}
\newcommand{\nchn}[1]{\binom{#1}{\lfloor \frac{#1}{2}\rfloor}}
\newcommand{\ignore}[1]{}
\DeclareMathSymbol{\lsb@l}{\mathalpha}{letters}{`l}
\title{Query complexity of Boolean functions\\ on the middle slice of the cube}
\author{D\'aniel Gerbner$^{a}$, Bal\'azs Keszegh$^{a,b}$,
D\'aniel T. Nagy$^{a}$, Kartal Nagy$^{b}$, D\"om\"ot\"or P\'alv\"olgyi$^{b,a}$, Bal\'azs Patk\'os$^a$, G\'abor Wiener$^c$
\\
\small $^a$ HUN-REN Alfr\'ed R\'enyi Institute of Mathematics\\
\small $^b$ ELTE E\"otv\"os Lor\'and University\\
\small $^c$ Budapest University of Technology and Economics, \\
\small Department of Computer Science and Information Theory \\
}
\date{}
\begin{document}

\maketitle

\begin{abstract}
     We study the query complexity on slices of Boolean functions. Among other results we show that there exists a Boolean function for which we need to query all but 7 input bits to compute its value, even if we know beforehand that the number of 0's and 1's in the input are the same, i.e., when our input is from the middle slice.
     This answers a question of Byramji.
     Our proof is non-constructive, but we also propose a concrete candidate function that might have the above property.
     Our results are related to certain natural discrepancy type questions that, somewhat surprisingly, have not been studied before.
\end{abstract}

\section{Introduction}

In this note we consider 0--1 functions on the set of $k$-element subsets of an $n$-element set. We use standard notation: $[n]=\{1,2\dots,n\}$, $[a,b]=\{a,a+1,\dots,b\}$, $2^S=\{T:T\subseteq S\}$, $\binom{S}{k}=\{T\subseteq S:|T|=k\}$. Using these, our main interest is the \textit{query complexity} of Boolean functions $f\colon \binom{[n]}{k}\rightarrow \{0,1\}$. The deterministic query
complexity $D_k(f)$ of a function $f$ is the minimum number of queries that a deterministic
adaptive algorithm needs to compute $f(A)$ correctly for every $A\in \binom{[n]}{k}$  with queries being elements $i$ of $[n]$ and the answer being $i\in A$ or $i \notin A$.

This can be viewed as a two-player game between the Questioner and the Adversary. In the game, $f$ is known to both players and the Adversary thinks of a set $A\in \binom{[n]}{k}$. In a round, Questioner may ask whether $i\in A$ for some $i\in [n]$, and then the Adversary answers this. Questioner's aim is to determine $f(A)$ with using as few queries as possible, while the Adversary tries to postpone this by possibly ``changing $A$ in his mind" such that it stays consistent with his previous answers. The number of rounds in a game played according to optimal strategies by both players is the query complexity of $f$, denoted by $D_k(f)$.
Let
$D_k(n)=\max \{D_k(f)\mid f:\binom{[n]}{k}\rightarrow \{0,1\}\}$ denote the maximum worst case deterministic query complexity over all Boolean functions on the slice $\binom{[n]}{k}$.
As we will be mainly interested in how close $D_k(n)$ can get to $n$, we introduce $E_k(n):=n-D_k(n)$.

Our starting points are the following results of Byramji. For a more detailed introduction on query complexity of Boolean functions on different domains, we refer the interested reader to his manuscript \cite{byramji}. In \cite{ow}, complexity problems on slices were investigated to prove a robust version of the Kruskal--Katona theorem and to find an optimal weak-learning algorithm for monotone functions.

\begin{thm}[Byramji \cite{byramji}]\label{fixthm}\
\begin{enumerate}
    \item $E_2(n)=\Theta(\log n)$.
    \item
    For any fixed $k$, $\Omega(\log^{(\binom k2)}) n=E_k(n)= O(\sqrt{\log^{(k-2)} n})$, where $\log^{(k)} n$ denotes the $k$ times iterated logarithm.
\end{enumerate}

\end{thm}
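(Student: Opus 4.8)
The plan is to treat $f$ as a $2$-colouring of the $k$-subsets of $[n]$ and to analyse both directions through the same engine: the Questioner tries to confine the unknown set $A$ to a \emph{monochromatic} set (one all of whose $k$-subsets receive the same colour), which is exactly the situation in which $f(A)$ is forced, while the Adversary tries to keep the set of consistent $A$'s non-monochromatic for as long as possible. Part (1) will come out as the cases $k=2$ and (for the recursion) $k=1$ of Part (2), so I describe the general argument and specialise at the end. Throughout I use the hypergraph Ramsey bounds in the form: every $2$-colouring of $\binom{[n]}{k}$ contains a monochromatic set of size $\Omega(\log^{(k-1)}n)$ (Erd\H{o}s--Rado), while there exist colourings whose largest monochromatic set has size $O(\sqrt{\log^{(k-2)}n})$ (Erd\H{o}s--Hajnal stepping-up), the square root reflecting the quadratic term in the exponent of the known lower bounds for $R_k(t,t)$.

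For the lower bound $E_k(n)=\Omega(\log^{(\binom k2)}n)$ I would give a recursive Questioner strategy and induct on $k$. The base case $k=1$ is direct: for $f\colon[n]\to\{0,1\}$ the Questioner queries the smaller colour class, after which the remaining elements are monochromatic, so $E_1(n)=\Theta(n)=\Theta(\log^{(0)}n)$. For the step, fix $f$ and use Erd\H{o}s--Rado to find a monochromatic set $M$ with $m:=|M|=\Omega(\log^{(k-1)}n)$. The Questioner first queries every vertex of $[n]\setminus M$ (at most $n-m$ queries) and sets $Y=A\cap([n]\setminus M)$, $j=|Y|$. If $j=0$ then $A\subseteq M$ and monochromaticity of $M$ determines $f(A)$; if $j=k$ then $A$ is already known. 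If $1\le j\le k-1$, then $A=Y\cup B$ with $B\in\binom{M}{k-j}$ and $B\mapsto f(Y\cup B)$ is an arbitrary $(k-j)$-uniform colouring of $M$, which the Questioner resolves inside $M$ by the inductive strategy using at most $m-E_{k-j}(m)$ further queries. Since $E_{k-j}(m)$ is smallest at $j=1$, the Adversary's best response is to reveal exactly one outside element, and the total cost is at most $n-E_{k-1}(m)$. Hence $E_k(n)\ge E_{k-1}\!\left(\Omega(\log^{(k-1)}n)\right)$, and unrolling --- each step prepends $k-1$ iterated logarithms because $\binom{k-1}{2}+(k-1)=\binom{k}{2}$ --- yields $E_k(n)=\Omega(\log^{(\binom k2)}n)$. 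For $k=2$ this is exactly $E_2(n)\ge E_1(\Omega(\log n))=\Omega(\log n)$.

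For the upper bound $E_k(n)=O(\sqrt{\log^{(k-2)}n})$ I would exhibit one hard $f$ and bound the Questioner from below by the plain ``always answer no'' Adversary. Under all-no answers the consistent sets are exactly the $k$-subsets of the unqueried set $R$, so $f$ is determined if and only if $R$ is monochromatic; therefore $D_k(f)\ge n-s(f)$, where $s(f)$ is the size of the largest monochromatic set of $f$. Taking $f$ to be an Erd\H{o}s--Hajnal stepping-up colouring with $s(f)=O(\sqrt{\log^{(k-2)}n})$ gives $E_k(n)\le s(f)=O(\sqrt{\log^{(k-2)}n})$. The same scheme proves the sharp upper bound in Part (1): a random (Ramsey) graph has $\omega,\alpha=O(\log n)$, so the all-no Adversary forces $D_2(f)\ge n-O(\log n)$, giving $E_2(n)=O(\log n)$ and hence $E_2(n)=\Theta(\log n)$.

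The main obstacle I expect is not any single estimate but the bookkeeping that makes the recursion legitimate: one must check that revealing elements outside $M$ genuinely turns the residual task into a lower-uniformity instance on $M$ to which induction applies, and that the Adversary cannot do better than forcing the single-reveal ($j=1$) branch --- in particular that mixing ``yes'' and ``no'' answers never helps the Adversary beyond this. The remaining difficulty is quantitative and is inherited from Ramsey theory: the gap between $\log^{(\binom k2)}n$ and $\sqrt{\log^{(k-2)}n}$ for $k\ge 3$ mirrors the (still open) gap between the lower and upper bounds for the hypergraph Ramsey numbers $R_k(t,t)$, so closing it here is at least as hard as the corresponding Ramsey problem.
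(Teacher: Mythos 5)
This theorem is quoted from Byramji's paper and is not reproved here, so there is no in-paper proof to compare against. Judged on its own, your Ramsey-theoretic argument is the natural (and, as far as I can tell, essentially Byramji's) route, and it is correct in outline: for the lower bound on $E_k$, query everything outside an Erd\H{o}s--Rado monochromatic set $M$ and recurse on the residual lower-uniformity slice problem on $M$; for the upper bound, play the all-\emph{no} Adversary against a stepping-up colouring, which correctly gives $D_k(f)\ge n-s(f)$, and the arithmetic $\binom{k-1}{2}+(k-1)=\binom{k}{2}$ checks out. One small but genuine imprecision: you justify charging the recursion at $j=1$ by asserting that $E_{k-j}(m)$ is smallest at $j=1$, i.e.\ that $E$ is monotone in the uniformity --- but that monotonicity is precisely the open question of Byramji that this paper discusses (and only partially answers), so you may not assume it. The fix is harmless: your strategy costs at most $n-\min_{1\le j\le k-1}E_{k-j}(m)$, and the inductive hypothesis already bounds every term from below, $E_{k-j}(m)=\Omega\bigl(\log^{(\binom{k-j}{2})}m\bigr)=\Omega\bigl(\log^{(\binom{k-1}{2})}m\bigr)$, since applying more iterated logarithms only decreases the quantity; the stated conclusion $E_k(n)=\Omega\bigl(\log^{(\binom{k}{2})}n\bigr)$ is unaffected.
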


\begin{thm}[Byramji \cite{byramji}]\label{felthm}
$E_{n/2}(n)= O(\log\log n)$. 
\\
Furthermore, there exists an explicit function $f:\binom{[n]}{n/2}\rightarrow \{0,1\}$ with $E_{n/2}(f)=O(\log n)$.
\end{thm}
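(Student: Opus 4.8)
The plan is to reformulate the game in terms of \emph{subslices}. For disjoint $Y,R\subseteq[n]$ write $\sigma(Y,R)=\{A\in\binom{[n]}{n/2}:Y\subseteq A\subseteq Y\cup R\}$; this is the set of middle-slice sets consistent with the partial information ``every element of $Y$ is in, every element of $[n]\setminus(Y\cup R)$ is out'', and it is a copy of the slice $\binom{R}{\,n/2-|Y|\,}$. In the Questioner--Adversary game the state after some queries is exactly such a $\sigma(Y,R)$, with $R$ the as-yet-unqueried elements, and $f$ is determined precisely when it is constant on $\sigma(Y,R)$. Call $\sigma(Y,R)$ \emph{bichromatic} if $f$ takes both values on it, and \emph{near-balanced} if $\bigl|\,(n/2-|Y|)-|R|/2\,\bigr|\le 1$. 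After a query $i\in R$ the two children $\sigma(Y,R\setminus i)$ and $\sigma(Y\cup i,R\setminus i)$ arise, and their balance differs from the parent's by $+1/2$ and $-1/2$ respectively, so a near-balanced parent always has a near-balanced child. Hence I would first prove the backbone lemma: \emph{if every near-balanced $\sigma(Y,R)$ with $|R|\ge t+1$ is bichromatic, then $E_{n/2}(f)\le t$.} Indeed, the Adversary keeps the state near-balanced; as long as $|R|\ge t+1$ the chosen child is bichromatic, so the value stays undetermined until $|R|=t$, forcing at least $n-t$ queries.

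Both bounds then rest on a first-moment estimate. For a uniformly random $f$, a fixed $\sigma(Y,R)$ with $|R|=r$ is monochromatic with probability $2^{\,1-\binom{r}{\lfloor r/2\rfloor}}$, and the number of near-balanced subslices with a given $r$ is at most $\binom{n}{r}\binom{n-r}{\,n/2-\lfloor r/2\rfloor\,}\cdot O(1)\le 4^{n}$. Summing the union bound over $r\ge t$, the dominant term is $r=t$, and the sum is $o(1)$ as soon as $\binom{t}{\lfloor t/2\rfloor}>2n$, that is once $t=\log_2 n+O(\log\log n)$. By the backbone lemma a random $f$ then satisfies $E_{n/2}(f)=O(\log n)$, which already gives $E_{n/2}(n)=O(\log n)$ non-explicitly.

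To obtain the \emph{explicit} function at the same quality I would exhibit a concrete colouring and verify the no-large-monochromatic-subslice property directly, since the number of bad events ($\sim 4^{n}$) is too large for a generic conditional-expectations derandomisation. The natural candidate is an algebraic colouring, e.g.\ $f(A)=\chi\bigl(\sum_{i\in A} w_i \bmod p\bigr)$ for a prime $p=\mathrm{poly}(n)$, explicit weights $w_i$, and a balanced character (or quadratic-residue) indicator $\chi$. For a near-balanced $\sigma(Y,R)$ the relevant quantity is the distribution of $\sum_{i\in S} w_i$ over $S\in\binom{R}{\,n/2-|Y|\,}$, and one wants this to equidistribute modulo $p$ whenever $|R|\ge C\log n$; a Weil-type character-sum (equidistribution) estimate then forces $f$ to take both values, so no near-balanced subslice with $|R|\ge C\log n$ is monochromatic, giving $E_{n/2}(f)=O(\log n)$. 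The technical heart here is exactly this equidistribution estimate and the choice of weights that makes it hold for \emph{every} sufficiently large $R$.

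The improvement to $E_{n/2}(n)=O(\log\log n)$ is the crux and cannot come from the above: the factor $4^{n}$ forces $\binom{t}{\lfloor t/2\rfloor}\gtrsim n$, hence $t\gtrsim\log n$, and a random $f$ genuinely does have monochromatic near-balanced subslices with $\Theta(\log n)$ free coordinates. The way past this is to stop requiring that \emph{every} near-balanced subslice be bichromatic and instead let the Adversary \emph{avoid} the few monochromatic ones of intermediate size: the Adversary maintains not a single state but a rich family of consistent near-balanced continuations and needs only one surviving bichromatic child at each query. The key is to compress the set of ``bad events'' the random $f$ must avoid from $4^{n}$ down to quasipolynomial in $n$ (for instance by coarsening $[n]$ into $\mathrm{polylog}(n)$ blocks and tracking only block-level balance, so that the subslices the strategy can be forced into are indexed by a quasipolynomial family), after which the same union bound needs only $\binom{t}{\lfloor t/2\rfloor}\gtrsim\mathrm{polylog}(n)$, i.e.\ $t=O(\log\log n)$. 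I expect the main obstacle to be precisely the joint design of this Adversary strategy and the matching compressed union bound: one must simultaneously guarantee that only quasipolynomially many subslices are ever forced and that a random $f$ keeps a bichromatic near-balanced continuation available at every node of the strategy. The doubly-logarithmic bound is exactly the trade-off between these two requirements.
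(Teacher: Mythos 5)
This statement is quoted from Byramji's paper and is not proved anywhere in the present manuscript, so there is no in-paper proof to compare against; I can only assess your argument on its own terms, and it does not establish either of the two claims. Your subslice reformulation, the backbone lemma (a near-balanced state always has a near-balanced child, so the Adversary can stay inside bichromatic near-balanced subslices until few free coordinates remain), and the first-moment bound are all correct as far as they go, but they yield only the \emph{non-explicit} bound $E_{n/2}(n)=O(\log n)$: this is weaker than the first claim, which asks for $O(\log\log n)$, and of the wrong kind for the second, which asks for an \emph{explicit} function. For the $O(\log\log n)$ part you correctly diagnose that the $4^{n}$-fold union bound cannot be pushed below $t\approx\log_2 n$, but the proposed remedy --- coarsening $[n]$ into $\mathrm{polylog}(n)$ blocks and compressing the family of reachable subslices to quasipolynomial size --- is a research programme rather than a proof: you construct neither the Adversary strategy nor the compressed family, and you yourself identify their joint design as the open obstacle. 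For the explicit part, the entire content of the claim is the asserted equidistribution of $\sum_{i\in S}w_i \bmod p$ over $S\in\binom{R}{m}$ for \emph{every} $R$ with $\abs{R}\ge C\log n$; no weights are specified, no character-sum estimate is carried out, and it is not clear that a Weil-type bound applies to subset sums over arbitrary index sets $R$. Both claims of the theorem therefore remain unproven.

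One structural remark that would let you recover (and beat) the first claim: the present paper's Theorem \ref{main} supersedes it by proving $E_{n/2}(n)\le 7$, and it does so with a first-moment argument over \emph{decision trees} rather than over monochromatic subslices (Lemmas \ref{count} and \ref{kozep}: if the number of decision trees of height at most $n-t$ is less than $2^{\binom{n}{n/2}}$, then some $f$ admits no such tree). The property your union bound certifies --- that every near-balanced subslice with at least $t+1$ free coordinates is bichromatic --- is genuinely stronger than the property actually needed, namely that $f$ admits no shallow decision tree; a decision tree of height $n-t$ only ever reaches $2^{n-t}$ of the roughly $4^{n}$ subslices you insist on controlling. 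This over-requirement is exactly why your union bound stalls at $t=\Theta(\log n)$ while the tree count reaches a constant.
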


The main result of this note is the following improvement on Theorem \ref{felthm}.

\begin{thm}\label{main} \
\begin{enumerate}
    \item[(i)]
    $E_{n/2}(n)\le 7$
    for all $n$.
\item[(ii)]
$E_{n/2}(n)\le 5$
for all $n\ge 100$.
\item[(iii)]
For any constant $\alpha\le 1/2$ there exists $C=C_\alpha$ such that $E_{\alpha n}(n)\le C$.
\end{enumerate}
\end{thm}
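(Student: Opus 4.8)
The plan is to recast the statement through the Questioner/Adversary game and then to manufacture a hard function by a counting argument. Fix the middle slice and write $N=\binom{n}{n/2}$. By definition $E_{n/2}(n)\le c$ means there is an $f$ with $D_{n/2}(f)\ge n-c$, i.e.\ no adaptive strategy reading at most $n-c-1$ bits computes $f$. I would phrase such a strategy as a depth-$(n-c-1)$ decision tree: each leaf $\ell$ is reached by fixing the queried coordinates, so the inputs reaching $\ell$ form a subcube-slice $S_\ell$ (the sets agreeing with the fixed coordinates and having the right total weight), and the strategy succeeds exactly when $f$ is constant on every $S_\ell$. Since every input follows a unique root-to-leaf path, the non-empty leaf-slices \emph{partition} $\binom{[n]}{n/2}$. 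So the goal becomes: produce $f$ that is non-constant on some reachable leaf-slice of \emph{every} shallow tree.

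The second step is the probabilistic computation. Take $f$ uniform on $\binom{[n]}{n/2}$. For a fixed tree $T$, the events ``$f$ constant on $S_\ell$'' are independent over leaves (disjoint supports), each of probability $2^{1-|S_\ell|}$, so $\Pr[T\text{ computes }f]=\prod_\ell 2^{1-|S_\ell|}=2^{L(T)-N}$, where $L(T)$ is the number of non-empty leaves and $\sum_\ell|S_\ell|=N$. The slice constraint is exactly what makes $L(T)$ small: along any path the number of $1$-answers must stay in a window of length $c+1$ around $n/2$, so only the $\approx c+2$ near-central columns of Pascal's triangle survive, giving $L(T)\le \gamma_c N\,(1+o(1))$ with $\gamma_c=(c+2)2^{-(c+1)}<1$ for every $c\ge 1$ (for $c=7$ one gets $\gamma_7=9/256$). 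Thus a single tree fails on a random $f$ except with the doubly-exponentially small probability $2^{-(1-\gamma_c)N}$.

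For \emph{non-adaptive} Questioners (a fixed set of $n-c-1$ coordinates) there are only $\binom{n}{c+1}\le n^{c+1}$ strategies, and the union bound $n^{c+1}2^{\gamma_c N}<2^{N}$ already produces an $f$ defeating all of them; the constant in part (iii) is read off the same way, by choosing $C_\alpha$ so that the analogous central-column fraction on $\binom{[n]}{\alpha n}$ is below $1$. The genuine obstacle, and where I expect essentially all the difficulty to sit, is \emph{adaptivity}: the number of depth-$(n-c-1)$ trees is $2^{\omega(N)}$ (just labelling the $\approx\gamma_c N$ branching nodes costs $n^{\gamma_c N}=2^{\Theta(N\log n)}$), so the naive union bound over trees diverges. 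One also cannot shortcut this by forcing every small balanced subcube-slice to be bichromatic: the balanced $(c+1)$-slices number far more than $2^{\binom{c+1}{\lfloor (c+1)/2\rfloor}}$, so by a Property-B count no colouring keeps all of them bichromatic, and the Adversary is instead obliged to \emph{navigate adaptively around} the unavoidable monochromatic slices. This is the discrepancy-type phenomenon alluded to in the introduction.

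To clear this obstacle I would count \emph{distinct computable functions} rather than trees. In a reduced tree every internal node genuinely branches into two non-empty children — on the middle slice any forced query would already have pinned down $A$ and hence be a leaf — so such a tree is binary with $L\le\gamma_c N$ leaves and induces a partition of $\binom{[n]}{n/2}$ into near-central subcube-slices. The key estimate to establish is that the number of \emph{realizable such partitions} is at most $2^{(1-\gamma_c-\varepsilon)N}$; multiplying by the $2^{L}$ colourings then keeps the total number of computable functions below $2^{N}$ and yields a hard $f$. Proving this partition bound is the main new input, and I expect it to be the hardest part of the argument. The explicit thresholds then follow by optimizing $c$ against this estimate: $c=5$ once $n$ is large enough that the lower-order terms are negligible (giving (ii) for $n\ge 100$), the slightly weaker $c=7$ covering all $n$ after checking the small cases directly via $\gamma_c<1$ (giving (i)), and $c=C_\alpha$ giving (iii) verbatim.
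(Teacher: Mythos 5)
Your framing --- decision trees on the slice, pruning a branch once $k$ ones or $n-k$ zeros have been revealed, and a counting/probabilistic argument showing that shallow trees cannot compute all $2^{\binom{n}{n/2}}$ functions --- is exactly the paper's setup. The problem is at the decisive step. You assert that the naive union bound over depth-$(n-c-1)$ adaptive trees diverges because ``labelling the $\approx\gamma_c N$ branching nodes costs $n^{\gamma_c N}$,'' and you therefore defer the whole argument to an unproven ``key estimate'' that the number of realizable leaf-partitions is at most $2^{(1-\gamma_c-\varepsilon)N}$. That estimate is never established, and since it carries the entire content of the theorem, the proposal as written is not a proof. Worse, the premise for abandoning the direct count is a miscalculation: a node at depth $n-\ell$, i.e.\ with $\ell$ coordinates still unqueried, admits only $\ell$ labels, not $n$, and the number of live nodes at that depth is at most $\sum_{i=k-\ell}^{k}\binom{n-\ell}{i}\le(\ell+1)\binom{n-\ell}{\lfloor (n-\ell)/2\rfloor}$, which for $k=n/2$ is roughly $(\ell+1)2^{-\ell}\binom{n}{n/2}$. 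So the deep levels, where almost all nodes live, contribute very few bits per node, and the total description length of a tree of height $n-t$ is about
\[
\binom{n}{\lfloor n/2\rfloor}\Bigl((t+1)2^{-t}+\sum_{\ell>t}\bigl((\ell+1)\log\ell+2\bigr)2^{-\ell}\Bigr),
\]
which is strictly below $\binom{n}{\lfloor n/2\rfloor}$ already for $t=7$ (and for $t=5$ when $n\ge 100$). This convergent-series accounting is precisely the paper's proof: Lemma~\ref{count} bounds the number of trees level by level with the factor $\ell^{(\cdot)}$ for internal nodes and $2^{(\cdot)}$ for leaves, and Lemma~\ref{kozep} verifies the resulting numerical inequality. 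Since each tree computes exactly one function, (number of trees) $<2^{\binom{n}{k}}$ already produces a hard $f$; your refinement via $\Pr[T\text{ computes }f]=2^{L(T)-N}$ is correct but unnecessary, and your ``partition-counting'' detour is both unproved and unneeded. Part (iii) in your sketch inherits the same gap; the paper settles it by the same tree count with the cruder per-node bound $\ell^{(\cdot)}$ and a separate convergence estimate (Lemma~\ref{alfa}).
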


We also prove some ``monotonicity results" which weakly answer another problem raised by Byramji. If the Adversary gives away the information $n\in A$ without the Questioner asking the query $n$, then the Questioner needs to determine $f'(B)$ for $f':\binom{[n-1]}{k-1}\rightarrow \{0,1\}$ with $f'(B):=f(A\cup \{n\})$. This implies
$D_k(n)\ge D_{k-1}(n-1)$. Similarly, if
the Adversary gives away the information
$n\notin A$ for free, then we obtain
$D_k(n)\ge D_k(n-1)$.
These inequalities are equivalent to $E_k(n)\le E_{k-1}(n-1)+1$
and
$E_k(n)\le E_k(n-1)+1$.
Byramji \cite{byramji} asked whether
$E_{k'}(n)\le E_{k}(n)$
holds for all $k\le k'\le n/2$.
We do not have a proof for this, but we can show the following slightly weaker statement.

\begin{lemma}\label{lem:order}
    For every $n=n_1+n_2$ and $k=k_1+k_2$, we have $E_k(n)\le E_{k_1}(n_1)+E_{k_2}(n_2)$.
\end{lemma}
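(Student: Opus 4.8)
The plan is to prove the equivalent reformulation $D_k(n) \ge D_{k_1}(n_1) + D_{k_2}(n_2)$, which follows by unfolding $E = n - D$ exactly as in the inequalities derived just above the statement: since $E_{k_1}(n_1) + E_{k_2}(n_2) = n - \bigl(D_{k_1}(n_1) + D_{k_2}(n_2)\bigr)$, the claimed bound on $E_k(n)$ is the same as a lower bound on $D_k(n)$. To produce such a lower bound I would build a single hard function on $\binom{[n]}{k}$ out of optimal hard functions on the two smaller slices and then exhibit an adversary for it.

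For the construction, partition $[n] = X_1 \sqcup X_2$ with $|X_1| = n_1$ and $|X_2| = n_2$, and fix functions $f_1 \colon \binom{X_1}{k_1} \to \{0,1\}$ and $f_2 \colon \binom{X_2}{k_2} \to \{0,1\}$ attaining $D_{k_1}(f_1) = D_{k_1}(n_1)$ and $D_{k_2}(f_2) = D_{k_2}(n_2)$. For $A \in \binom{[n]}{k}$ write $A_1 = A \cap X_1$ and $A_2 = A \cap X_2$, and on the \emph{diagonal} where $|A_1| = k_1$ (equivalently $|A_2| = k_2$) set $f(A) = f_1(A_1) \oplus f_2(A_2)$; off the diagonal define $f$ arbitrarily, say $f \equiv 0$. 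The point of the XOR is that it makes both blocks relevant at once: flipping the value of either coordinate flips $f$.

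For the lower bound I would let the Adversary keep the play on the diagonal throughout, running in parallel two optimal subgame adversaries $\cA_1, \cA_2$ for $f_1, f_2$, answering a query $i \in X_1$ by $\cA_1$ and a query $i \in X_2$ by $\cA_2$. Because $X_1, X_2$ are disjoint and queries concern single elements, the combined answers are consistent exactly when each block's answers are, and any pair of consistent completions $A_1 \in \binom{X_1}{k_1}$, $A_2 \in \binom{X_2}{k_2}$ glues to a consistent $A = A_1 \sqcup A_2 \in \binom{[n]}{k}$, since $k_1 + k_2 = k$. The crucial observation is that $f$ stays undetermined as long as \emph{either} block is: if, say, $f_1$ is still ambiguous, pick consistent $A_1, A_1'$ with $f_1(A_1) \ne f_1(A_1')$ and any consistent $A_2$; then $A_1 \sqcup A_2$ and $A_1' \sqcup A_2$ are consistent diagonal inputs with distinct $f$-values. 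The standard adversary bound guarantees that $\cA_j$ keeps block $j$ ambiguous until at least $D_{k_j}(f_j)$ queries have been asked inside $X_j$, and the interleaving with the other block is irrelevant since $\cA_j$'s answers depend only on the queries in $X_j$. Hence, taking contrapositives, before $f$ can be determined the Questioner must ask at least $D_{k_1}(f_1)$ queries in $X_1$ and at least $D_{k_2}(f_2)$ queries in $X_2$, giving $D_k(n) \ge D_k(f) \ge D_{k_1}(n_1) + D_{k_2}(n_2)$.

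The main thing to get right — and the only real subtlety — is that the off-diagonal values of $f$ cannot be exploited by the Questioner to finish early. This is handled by the direction of the argument above: the two witnesses $A_1 \sqcup A_2$ and $A_1' \sqcup A_2$ are genuine elements of $\binom{[n]}{k}$ realizing different values of $f$, so enlarging the pool of consistent inputs by off-diagonal sets can only \emph{increase} ambiguity, never resolve it, and we never need to know what $f$ does off the diagonal. I expect the remaining care to be pure bookkeeping: checking the boundary cases $k_i \in \{0, n_i\}$ (where $D_{k_i}(n_i) = 0$ and the corresponding block is vacuous) and applying "the Adversary forces $D_{k_j}(f_j)$ queries" in its precise form, namely that after fewer than $D_{k_j}(f_j)$ queries in $X_j$ two completions with distinct $f_j$-value remain.
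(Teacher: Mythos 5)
Your proposal is correct and takes essentially the same route as the paper: partition $[n]$ into blocks of sizes $n_1,n_2$, combine optimal hard functions $f_1,f_2$ on the two sub-slices into a function supported on the ``diagonal'' $|A\cap X_1|=k_1$ (the paper uses the equality $f_1=f_2$ where you use XOR --- complementary but interchangeable), and run the two adversaries in parallel so that $f$ stays undetermined until both blocks are resolved, yielding $D_k(n)\ge D_{k_1}(n_1)+D_{k_2}(n_2)$. The only cosmetic difference is that the paper has the Adversary announce the diagonal constraint for free, whereas you argue directly that the ambiguity witnesses lie on the diagonal; both handle the off-diagonal values correctly.
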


By choosing $n_2=2k_2$, we get the following monotonicity result from Theorem \ref{main} (i).

\begin{corollary}\label{cor:order}
    For any $k_1\le k\le n/2$, we have $E_{k}(n)\le E_{k_1}(n-2(k-k_1))+7$.
\end{corollary}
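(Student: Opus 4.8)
The plan is to apply Lemma~\ref{lem:order} to a single well-chosen partition of the parameters, so that essentially all the work is bookkeeping together with one invocation of Theorem~\ref{main}(i). Concretely, I would set $k_2:=k-k_1$ and $n_2:=2k_2=2(k-k_1)$, and correspondingly $k_1=k-k_2$ and $n_1:=n-n_2=n-2(k-k_1)$. The point of choosing $n_2=2k_2$ (rather than any other value) is that it forces the second error term produced by the lemma to be a \emph{middle-slice} quantity, which is exactly the object controlled by Theorem~\ref{main}(i).

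Before applying the lemma I would verify that $(n_1,k_1)$ and $(n_2,k_2)$ are legitimate slice parameters, i.e.\ that $0\le k_i\le n_i$ for $i=1,2$. For $i=2$ this is immediate: $k_2=k-k_1\ge 0$ since $k\ge k_1$, and $k_2\le 2k_2=n_2$. For $i=1$ I would use the hypothesis $k\le n/2$, hence $n\ge 2k$, to get
\[
n_1=n-2(k-k_1)=n-2k+2k_1\ge 2k_1\ge k_1\ge 0,
\]
so the slice $\binom{[n_1]}{k_1}$ is nonempty and $E_{k_1}(n_1)$ is well defined.

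With the split justified, Lemma~\ref{lem:order} immediately gives
\[
E_k(n)\le E_{k_1}(n_1)+E_{k_2}(n_2)=E_{k_1}\bigl(n-2(k-k_1)\bigr)+E_{k-k_1}\bigl(2(k-k_1)\bigr).
\]
The second summand has the form $E_{m/2}(m)$ with $m=2(k-k_1)$, so Theorem~\ref{main}(i) bounds it by $7$, yielding $E_k(n)\le E_{k_1}\bigl(n-2(k-k_1)\bigr)+7$, as claimed.

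There is no real obstacle here beyond the validity check above; the only thing to be careful about is that the hypothesis $k\le n/2$ is genuinely used (to guarantee $n_1\ge k_1$), and that one recognizes $n_2=2k_2$ as precisely the substitution making the residual error term a balanced-slice complexity to which Theorem~\ref{main}(i) applies.
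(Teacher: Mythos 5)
Your proposal is correct and matches the paper's argument exactly: the corollary is obtained by applying Lemma~\ref{lem:order} with $n_2=2k_2$ and $k_2=k-k_1$, then bounding the resulting middle-slice term $E_{k_2}(2k_2)$ by $7$ via Theorem~\ref{main}(i). The parameter-validity check you include is a reasonable addition that the paper leaves implicit.
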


In particular, since by Theorem \ref{fixthm}, we have $E_c(n)= O(\sqrt{\log^{(c-2)} n})$ for any constant $c$, this implies that $E_k(n)= O(\sqrt{\log^{(c-2)} n})$ holds for any $k\ge c$.
While we could prove $E_k(n)= O(1)$ only for $k=\Omega(n)$, we conjecture that this holds already for much smaller $k$.

\section{Proofs}

The proof of Theorem \ref{main} is a counting proof and relies heavily on the notion of decision trees. A decision tree is a rooted  binary tree. Vertices represent the queries of the strategy, so, as we can assume that a strategy is wise enough not to ask the same query twice, an inner node at level $n-\ell$ can be labelled $\ell$ ways, depending on the index $i$ of the yet unqueried locations at the node. Edges are labelled with 0 or 1 based on the answers to the query of their parent node with 1 meaning $i \in A$ and 0 meaning $i \notin A$. Finally, leaves are labelled with 0 or 1, as if the strategy does not continue, then we should know the value of the function.  We bound the number of strategies of length at most $n-t$ with the following lemma.

\begin{lemma}\label{count}
    The total number of decision trees of height at most $n-t$ is at most
 \[
g(n,k,t):=2^{\sum_{i=k-t}^{k}\binom{n-t}{i}}
\cdot  \prod_{\ell=t+1}^n \ell^{\sum_{i=k-\ell}^{k}\binom{n-\ell}{i}-\binom{n-l-1}{k-1}-\binom{n-l-1}{n-k+1}}\cdot 2^{\binom{n-l-1}{k-1}+\binom{n-l-1}{n-k+1}}.
\]
\end{lemma}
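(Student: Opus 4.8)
The plan is to count the relevant decision trees level by level, charging each node the number of choices available to the Questioner there, and to control adaptivity by \emph{encoding each node by its answer string}. We may clearly restrict attention to trees that never branch on an answer inconsistent with $\binom{[n]}{k}$ and that stop as soon as the input set is fully determined, since such trees already realize every function $f$ with $D_k(f)\le n-t$; these are the trees the bound is really counting. For a node $v$ at depth $d$ let $s_v\in\{0,1\}^{d}$ be the sequence of answers along the root-to-$v$ path, and write $\ell=n-d$ for the number of unqueried coordinates. Since the query asked at each node is fixed by the tree, the string $s_v$ determines the whole path, so $v\mapsto s_v$ is \emph{injective on each level}. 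A node is feasible (reachable by some $A$) exactly when the number $a$ of ones in $s_v$ satisfies $0\le k-a\le\ell$, i.e.\ $k-\ell\le a\le k$; crucially this depends only on $a$, not on which coordinates were queried. Hence the number of feasible nodes at depth $d$ is at most the number of feasible answer strings, namely $\sum_{i=k-\ell}^{k}\binom{n-\ell}{i}$, uniformly over all trees.

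Next I would split feasible nodes into \emph{determined} ones, where a single consistent set remains ($a=k$ or $a=k-\ell$), and \emph{active} ones, where at least two consistent sets remain ($k-\ell<a<k$). At an active internal node the Questioner picks one of at most $\ell$ unqueried coordinates, a factor $\ell$. At a node where the set has \emph{just} become determined, the entire subtree is traversed by one input only, so the sole remaining freedom is the value of $f$ on that input, a factor $2$; we prune the subtree there. This is exactly the step that makes the bound strong, replacing a factor $\ell$ by a factor $2$ on every determined branch. A set becomes determined for the first time precisely when the $k$-th one or the $(n-k)$-th zero is answered, so the corresponding strings end in $1$ with $k-1$ earlier ones, or end in $0$ with $n-k-1$ earlier zeros; their number is at most $\binom{n-\ell-1}{k-1}+\binom{n-\ell-1}{n-k-1}$, which is the exponent of $2$ appearing in $g$.

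To assemble the product I would use that the active strings and the just-determined strings are disjoint families of feasible strings. The active strings are those with $k-\ell<a<k$, of which there are $\sum_{i=k-\ell}^{k}\binom{n-\ell}{i}-\binom{n-\ell}{k}-\binom{n-\ell}{n-k}$; since $\binom{n-\ell}{k}\ge\binom{n-\ell-1}{k-1}$ and $\binom{n-\ell}{n-k}\ge\binom{n-\ell-1}{n-k-1}$, the number of active nodes at depth $d$ is at most $\sum_{i=k-\ell}^{k}\binom{n-\ell}{i}-\binom{n-\ell-1}{k-1}-\binom{n-\ell-1}{n-k-1}$, matching the exponent of $\ell$ in $g$. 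Multiplying $\ell^{(\text{active})}2^{(\text{just-determined})}$ over the internal levels $\ell=t+1,\dots,n$ (depths $0$ through $n-t-1$), and finally a factor $2$ for each of the at most $\sum_{i=k-t}^{k}\binom{n-t}{i}$ feasible leaves at the maximal depth $n-t$ (level $\ell=t$), produces exactly $g(n,k,t)$.

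The hard part, and the only genuinely nonroutine point, is the first paragraph: showing that \emph{adaptivity does not inflate the per-level counts}. The answer-string injection is what defeats it, since it bounds the number of feasible nodes at each depth by a quantity depending only on $d$, $k$, $n$ and not on the tree's adaptive query choices. The rest is bookkeeping: keeping the active/determined split consistent across levels (a determined node's descendants stay determined, so each isolated input is charged its factor $2$ exactly once at the moment of determination), and verifying the disjointness of the string families used to pass from the feasible-node bound to the displayed exponents of $\ell$ and $2$. (I note that the symmetry between completing the ones and completing the zeros makes the natural dual term $\binom{n-\ell-1}{n-k-1}$.)
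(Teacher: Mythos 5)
Your proof is correct and follows essentially the same route as the paper: a level-by-level count in which the nodes at depth $n-\ell$ are bounded by the number of feasible answer strings, inner nodes contribute a factor $\ell$ and just-determined leaves a factor $2$; you additionally make explicit the injectivity of the node-to-answer-string map on each level, which the paper leaves implicit. The only divergence is your leaf count $\binom{n-\ell-1}{k-1}+\binom{n-\ell-1}{n-k-1}$ versus the paper's $\binom{n-\ell-1}{k-1}+\binom{n-\ell-1}{n-k+1}$: yours is the correct count (a node is determined by its $(n-k)$-th zero exactly when its string ends in $0$ with $n-k-1$ earlier zeros), the paper's index $n-k+1$ appears to be a typo, and the difference is harmless downstream since both terms are only ever bounded above by $\binom{n-\ell}{\lfloor (n-\ell)/2\rfloor}$.
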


\begin{proof}
Observe that we do not have to consider complete binary trees, as if $k$ 1-answers or $n-k$ 0-answers (and thus $n-\ell-(n-k)=k-\ell$ 1-answers) are given, then we know the input, so we know the value of the function.
Therefore, on level $n-\ell$, there are at most
$\sum_{i=k-\ell}^{k}\binom{n-\ell}{i}$
vertices,
of which
$\binom{n-l-1}{k-1}+\binom{n-l-1}{n-k+1}$ are leaves.
Since inner vertices can be labelled in $\ell$ ways, and leaves in $2$ ways, this gives at most
\[\ell^{\sum_{i=k-\ell}^{k}\binom{n-\ell}{i}-\binom{n-l-1}{k-1}-\binom{n-l-1}{n-k+1}}\cdot 2^{\binom{n-l-1}{k-1}+\binom{n-l-1}{n-k+1}}
\]
options for level $n-l$ when $l>t$, and
\[2^{\sum_{i=k-t}^{k}\binom{n-t}{i}}
\]
for level $n-t$ which can only have leaves.
    This proves the lemma.
\end{proof}

\begin{lemma}\label{kozep}
  For the function  \[
f(n,t):=\frac{(t+1)\nchn{n-t}+\sum_{\ell=t+1}^{n}
 l\log l\nchn{n-\ell}+2\nchn{n-\ell}}{\nchn{n}}.
\]
we have $f(n,7)<1$ for all $n\ge 7$ and $f(n,5)<1$ for all $n\ge 100$.
\end{lemma}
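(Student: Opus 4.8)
The plan is to bound $f(n,t)$ from above by comparing each central binomial coefficient in the numerator to $\nchn{n}$ in the denominator. Writing $C_m:=\nchn{m}$, the key estimate is
\[
\frac{C_{n-\ell}}{C_n}\le 2^{-\ell}\cdot\frac{n+1}{n-\ell+1},\qquad 0\le \ell\le n.
\]
To prove it I would use that $C_m/C_{m-1}=2$ when $m$ is even and $C_m/C_{m-1}=\tfrac{2m}{m+1}$ when $m$ is odd, so that $C_{m-1}/C_m\le \tfrac{m+1}{2m}$ in all cases; multiplying these over $m=n-\ell+1,\dots,n$ and telescoping $\prod (m+1)/m=(n+1)/(n-\ell+1)$ gives the claim. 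Since every term in the numerator of $f(n,t)$ is a positive multiple of some $C_{n-\ell}$, this yields
\[
f(n,t)\ \le\ (t+1)2^{-t}\frac{n+1}{n-t+1}+\sum_{\ell=t+1}^{n}\bigl(\ell\log\ell+2\bigr)2^{-\ell}\frac{n+1}{n-\ell+1}\ =:\ B(n,t).
\]

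Next I would separate a convergent main part from a small error term. Setting $a_t:=t+1$ and $a_\ell:=\ell\log\ell+2$ for $\ell>t$, and writing $\frac{n+1}{n-\ell+1}=1+\frac{\ell}{n-\ell+1}$, we get $B(n,t)=\sum_{\ell=t}^n a_\ell 2^{-\ell}+E(n,t)$ with $E(n,t)=\sum_{\ell=t}^n a_\ell 2^{-\ell}\frac{\ell}{n-\ell+1}$. The first sum is at most the explicit convergent constant $S(t):=\sum_{\ell\ge t}a_\ell 2^{-\ell}$ (here $\log=\log_2$, as dictated by the counting in Lemma~\ref{count}); one finds numerically $S(7)\approx 0.29$ and $S(5)\approx 0.87$. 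For the error term I would split the range at $\ell=\lfloor n/2\rfloor$: for $\ell\le n/2$ one has $\frac{\ell}{n-\ell+1}\le \frac{2\ell}{n}$, so this part is at most $\frac{2}{n}S'(t)$ with $S'(t):=\sum_{\ell\ge t}\ell\,a_\ell 2^{-\ell}$ (again convergent, with $S'(5)\approx 6$); for $\ell>n/2$ the factor $2^{-\ell}\le 2^{-n/2}$ makes the total smaller than $\frac{n}{2}(n\log n+2)\,n\,2^{-n/2}$, which decays to $0$. Hence $f(n,t)\le S(t)+\frac{2}{n}S'(t)+\tau(n)$, where $\tau(n)$ is the negligible tail, and the whole bound is decreasing in $n$.

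For part~(i) ($t=7$) there is ample room: since $S(7)\approx0.29$, the displayed bound drops below $1$ once $n$ exceeds a modest explicit threshold $N_0$, and the finitely many remaining values $7\le n<N_0$ are settled by evaluating $f(n,7)$ directly (for these small $n$ the sum has only a few terms). For part~(ii) ($t=5$) the same bound applies, and because it is decreasing in $n$ it suffices to verify it at $n=100$, where $S(5)+\frac{2}{100}S'(5)\approx 0.870+0.120=0.990<1$.

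The main obstacle is exactly this last estimate. For $t=5$ the limiting value $S(5)\approx0.870$ leaves a margin of only about $0.13$, which the correction $\frac{2}{n}S'(5)\approx0.12$ at $n=100$ nearly exhausts, so the inequality holds only by a hair. The delicate point is therefore to pin down $S(5)$ and $S'(5)$ with certified rather than heuristic accuracy (bounding the geometric tails explicitly), and to check that the hypothesis $n\ge 100$ is precisely what makes $\frac{2}{n}S'(5)$ small enough; a smaller threshold would fail. By contrast the $t=7$ case is comfortable, and there the only care needed is in the finitely many small-$n$ evaluations.
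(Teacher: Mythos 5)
Your proposal is correct, but it reaches the conclusion by a different mechanism than the paper. The paper handles the case $n\ge 100$ by invoking the monotonicity $\frac{\nchn{n-\ell}}{\nchn{n}}\le\frac{\nchn{n'-\ell}}{\nchn{n'}}$ for odd $n'<n$, which reduces everything to the single value $n'=99$; it then evaluates the terms $\ell=5$ and $6\le\ell\le 15$ numerically with actual binomial coefficients and bounds the tail $\ell\ge 16$ by a geometric series of ratio $\tfrac23$, arriving at $f(n,5)<0.92$. You instead prove the explicit uniform estimate $\nchn{n-\ell}/\nchn{n}\le 2^{-\ell}\tfrac{n+1}{n-\ell+1}$ from the step ratios ($2$ for even $m$, $\tfrac{2m}{m+1}$ for odd $m$) and extract an $n$-independent limiting series $S(t)$ plus an $O(1/n)$ correction; your ratio bound is correct, the telescoping is right, and your numerics ($S(5)<0.87$, $S'(5)<6$, hence a bound of about $0.990$ at $n=100$, decreasing in $n$) check out, including the base-$2$ logarithm, which is indeed what the paper intends. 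Your route is more self-contained — it proves its key inequality rather than asserting monotonicity, and it exposes the actual limit $\lim_{n}f(n,5)=S(5)\approx 0.87$, which explains why $t=5$ is feasible at all — at the cost of a much thinner margin ($0.99$ versus the paper's $0.92$), caused by the lossy step $\tfrac{\ell}{n-\ell+1}\le\tfrac{2\ell}{n}$; a slightly sharper treatment of that factor would recover most of the gap. Both arguments fall back on finite computation for the small-$n$ range of the $t=7$ claim (the paper checks $7\le n<100$ by computer; you check $7\le n<N_0$ for the threshold $N_0$ where your tail term $\tau(n)$ becomes negligible, which is on the order of a few dozen), so neither is fully computation-free there. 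One small caveat: your remark that ``a smaller threshold would fail'' applies to your bound, not to the statement itself — the paper's sharper computation shows $f(n,5)$ is already well below $1$ at $n=99$ — so it should not be read as evidence that the hypothesis $n\ge 100$ is tight.
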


\begin{proof}
    The inequality $f(n,7)<1$ can be checked easily by computer for $7\le n <100$. Since $f(n,7)<f(n,5)$, it is enough to prove that $f(n,5)<1$ if $n\ge 100$.

Now we give an upper bound on $f(n,5)$ assuming $n\ge 100$. We will use that if $n'$ is odd and $n'<n$, then
$$\frac{\nchn{n-\ell}}{\nchn{n}}\le\frac{\nchn{n'-\ell}}{\nchn{n'}}.$$

We bound the tail end of the sum containing very small terms by a geometric series of quotient $\frac{2}{3}$ (in real, the ratios tend to $\frac12$).

$$f(n,5)=6\frac{\nchn{n-5}}{\nchn{n}}+\sum_{\ell=6}^{15}
 (l\log l+2)\frac{\nchn{n-\ell}}{\nchn{n}}+\sum_{\ell=16}^{n}
 (l\log l+2)\frac{\nchn{n-\ell}}{\nchn{n}}<$$

$$6\frac{\binom{94}{47}}{\binom{99}{49}}+\sum_{\ell=6}^{15}
 (l\log l+2)\frac{\nchn{99-\ell}}{\binom{99}{49}}+(16\log(16)+2)\frac{\nchn{n-16}}{\nchn{n}}\sum_{i=0}^{\infty} \left(\frac{2}{3}\right)^i<$$

 $$0.2+0.71+66\cdot \frac{\binom{83}{41}}{\binom{99}{49}}\cdot 3<0.92.$$
 This finishes the proof the lemma.
\end{proof}

\begin{lemma}\label{alfa}
    For any $0<\alpha<1/2$ there exists a constant $C=C_\alpha$ such that
    $$\sum_{l=C}^n \log(l) \sum_{i=k-\ell}^{k}\binom{n-\ell}{i} < \binom{n}{k}$$
    holds if $n$ is large enough and $k=\lfloor\alpha\cdot n\rfloor$.
\end{lemma}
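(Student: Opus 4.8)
The plan is to split the range of summation at the threshold $\ell^\ast:=n-2k$, which is exactly the point where $k\le (n-\ell)/2$ stops holding, and to bound the inner sum $S_\ell:=\sum_{i=k-\ell}^{k}\binom{n-\ell}{i}$ by two different estimates on the two sides. The small-$\ell$ part will be controlled by the choice of $C$, while the large-$\ell$ part will be shown to be negligible compared to $\binom{n}{k}$ once $n$ is large; adding the two gives the claim. Throughout I assume $n$ is large (so that $k=\lfloor\alpha n\rfloor$ satisfies $k/n$ close to $\alpha$, and $\ell^\ast\approx n(1-2\alpha)\to\infty$ exceeds any fixed $C$).

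For the small regime $C\le \ell\le \ell^\ast$, since $k\le (n-\ell)/2$ the coefficients $\binom{n-\ell}{i}$ are increasing on $i\in[k-\ell,k]$, so the largest of the at most $\ell+1$ terms is $\binom{n-\ell}{k}$ and hence $S_\ell\le(\ell+1)\binom{n-\ell}{k}$. Writing $\frac{\binom{n-\ell}{k}}{\binom{n}{k}}=\prod_{j=0}^{\ell-1}\frac{n-k-j}{n-j}$ and observing that each factor is at most $\frac{n-k}{n}=1-\tfrac{k}{n}\le 1-\tfrac{\alpha}{2}=:\rho<1$ for $n$ large, I obtain $S_\ell\le(\ell+1)\rho^{\ell}\binom{n}{k}$. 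Thus the small-$\ell$ contribution is at most $\binom{n}{k}\sum_{\ell\ge C}(\ell+1)\log(\ell)\,\rho^{\ell}$; this series converges, so its tail tends to $0$ as $C\to\infty$, and I fix $C=C_\alpha$ so that the tail is below $\tfrac12$. This is the step that pins down the constant, making the small-$\ell$ part at most $\tfrac12\binom{n}{k}$.

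For the large regime $\ell>\ell^\ast$ I use only the crude bound $S_\ell\le 2^{\,n-\ell}$, and since $n-\ell<2k$ here this is below $4^{k}$; summing $\log(\ell)\,2^{\,n-\ell}$ over these $\ell$ is a geometric-type sum dominated by its first term, so the whole large-$\ell$ contribution is at most $\log(n)\cdot 4^{k}$. The point is that $\binom{n}{k}$ dwins: from $\binom{n}{k}\ge 2^{H(k/n)n}/(n+1)$ (with $H$ the binary entropy) and the fact that $H(\alpha)>2\alpha$ for every $\alpha\in(0,1/2)$ — which follows because $\phi(\alpha):=H(\alpha)-2\alpha$ is strictly concave with $\phi(0^+)=\phi(1/2)=0$, hence strictly positive in between — one gets $\binom{n}{k}/4^{k}\ge 2^{(H(\alpha)-2\alpha-o(1))n}/(n+1)\to\infty$. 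So for $n$ large the large-$\ell$ part is below $\tfrac12\binom{n}{k}$, and combining the two regimes proves the lemma.

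The main obstacle is precisely the large-$\ell$ regime: there the clean estimate ``largest term $=\binom{n-\ell}{k}$'' fails because the central binomial $\nchn{n-\ell}$ now exceeds $\binom{n-\ell}{k}$, so the geometric decay used for small $\ell$ is unavailable and one must instead argue that all these terms are globally negligible. This is exactly where the strict inequality $H(\alpha)>2\alpha$ enters, and it explains both why the argument succeeds for every fixed $\alpha<1/2$ and why it degenerates at $\alpha=1/2$, where $H(\alpha)-2\alpha\to 0$ and no constant $C$ could suffice.
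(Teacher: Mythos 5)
Your proof is correct, and it takes a genuinely different route from the paper's. The paper reverses the order of summation and splits on the inner index $i$ at $\lfloor k/2\rfloor$: the part with $i\le k/2$ is absorbed crudely into $n^2\log(n)\binom{n-C}{\lfloor k/2\rfloor}=o\bigl(\binom{n}{k}\bigr)$, while for $i>k/2$ a ratio test in $\ell$ on $\log(\ell)\binom{n-\ell}{i}$ followed by a geometric series in $i$ reduces everything to $O_\alpha\bigl(\log(C)\,(1-\alpha)^C\bigr)\binom{n}{k}$, with $C$ chosen to kill the factor $(1-\alpha)^C$. You instead keep the order of summation and split on $\ell$ at $n-2k$: below the threshold the inner sum is unimodal with largest term $\binom{n-\ell}{k}\le(1-\alpha/2)^{\ell}\binom{n}{k}$, so the constant $C$ is used to make the tail of the convergent series $\sum_{\ell\ge C}(\ell+1)\log(\ell)(1-\alpha/2)^{\ell}$ small; above the threshold you discard all structure, bound the inner sum by $2^{n-\ell}<4^{k}$, and finish with the entropy estimate $\binom{n}{k}\ge 2^{nH(k/n)}/(n+1)$ together with the strict inequality $H(\alpha)>2\alpha$ on $(0,1/2)$. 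Both arguments ultimately hinge on the same geometric decay $\binom{n-\ell}{k}/\binom{n}{k}\le(1-k/n)^{\ell}$ and use $C$ in the same role; the difference is in how the deep tail is handled, where the paper's reversed summation avoids entropy bounds altogether, while your entropy step makes completely explicit why the method must degenerate as $\alpha\to 1/2$. All steps check out (the lower summation limit $k-\ell$ should simply be read as $0$ when negative, and if $n-2k<C$ your first regime is empty and the second bound alone suffices), so there is no gap.
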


\begin{proof}
    We reverse the order of summation and split the resulting sum in two parts:
$$\sum_{l=C}^n \log(l) \sum_{i=k-\ell}^{k}\binom{n-\ell}{i} < \sum_{i=0}^k \sum_{l=C}^n \log(l)\binom{n-\ell}{i}=$$
$$\sum_{i=0}^{\lfloor k/2 \rfloor} \sum_{l=C}^n \log(l)\binom{n-\ell}{i} + \sum_{i=\lfloor k/2 \rfloor+1}^k \sum_{l=C}^n \log(l)\binom{n-\ell}{i} $$

We estimate the first part as follows:
$$\sum_{i=0}^{\lfloor k/2 \rfloor} \sum_{l=C}^n \log(l)\binom{n-\ell}{i}<n^2\log(n)\binom{n-C}{\lfloor k/2 \rfloor}$$

This is less than $\frac{1}{2}\binom{n}{k}$ if $n$ is large enough.

To estimate the second part, observe that for $l>C$ and $i>k/2$

$$\frac{\log(l+1)\binom{n-\ell-1}{i}}{\log(l)\binom{n-\ell}{i}}=\frac{\log(l+1)}{\log(l)}\cdot\frac{n-l-i}{n-l}<\frac{\log(C+1)}{\log(C)}\cdot\frac{n-k/2}{n}=\frac{\log(C+1)}{\log(C)}(1-\alpha/2),$$
which is smaller than $1-\alpha/4$ for sufficiently large $C=C_\alpha$. Therefore we can upper bound the second part with a geometric series of quotient $1-\alpha/4$.

$$\sum_{i=\lfloor k/2 \rfloor+1}^k \sum_{l=C}^n \log(l)\binom{n-\ell}{i}\le \sum_{i=\lfloor k/2 \rfloor+1}^k \frac{4}{\alpha}\log(C)\binom{n-C}{i}$$

For $i\le k$, large $n$ and constant $C$ and we have
$$\frac{\binom{n-C}{i-1}}{\binom{n-C}{i}}=\frac{i}{n-C-i+1}<\frac{k}{n-k-C+1}<\beta<1$$
for some $\beta$ depending on $\alpha$. We can use a geometric series to get an upper bound here as well:

$$\frac{4}{\alpha}\log(C)\sum_{i=\lfloor k/2 \rfloor+1}^k \binom{n-C}{i}<\frac{4}{\alpha}\log(C)\frac{1}{1-\beta}\binom{n-C}{k}<$$
$$\frac{4}{\alpha}\log(C)\frac{1}{1-\beta}\binom{n}{k}\left(\frac{n-k}{n}\right)^C=\frac{4}{\alpha}\log(C)\frac{1}{1-\beta}(1-\alpha)^C\binom{n}{k}$$
This is less than $\frac{1}{2}\binom{n}{k}$ if $C=C_\alpha$ is a large enough constant, finishing the proof of the lemma.
\end{proof}

\begin{proof}[Proof of Theorem \ref{main}]

Observe that if $g(n,k,t)<2^{\binom{n}{k}}$ holds, then, by Lemma \ref{count} and as the number of functions $\binom{[n]}{k} \rightarrow \{0,1\}$ is $2^{\binom{n}{k}}$,  there exists $f$ that cannot have a decision tree of height at most $n-t$, and thus $E_n(k)\le t$.

To prove (i) and (ii), we observe \[\ell^{\sum_{i=k-\ell}^{k}\binom{n-\ell}{i}-\binom{n-l-1}{k-1}-\binom{n-l-1}{n-k+1}}\cdot 2^{\binom{n-l-1}{k-1}+\binom{n-l-1}{n-k+1}}
\le 2^{l\log l\nchn{n-\ell}+2\nchn{n-\ell}}
\]
and
\[2^{\sum_{i=k-t}^{k}\binom{n-t}{i}}\le 2^{(t+1)\nchn{n-t}}.
\]
Therefore, we obtain
\[
g(n,\lfloor n/2\rfloor,t)\le 2^{(t+1)\nchn{n-t}}\cdot \prod_{\ell=t+1}^{n}
 2^{l\log l\nchn{n-\ell}+2\nchn{n-\ell}}.
\]
We need this to be less than $2^{\nchn{n}}$.
By taking logarithm and dividing by $\nchn{n}$, the required inequality becomes
\[
\frac{(t+1)\nchn{n-t}+\sum_{\ell=t+1}^{n}
 l\log l\nchn{n-\ell}+2\nchn{n-\ell}}{\nchn{n}}<1,
\]
which is exactly the statement of Lemma \ref{kozep}.

To prove (iii), we upper bound all terms of $g(n,k,C)$ by $l^{ \sum_{i=k-\ell}^{k}\binom{n-\ell}{i}} $ and obtain
$$g(n,k,t)\le\prod_{l=C}^n l^{ \sum_{i=k-\ell}^{k}\binom{n-\ell}{i}} < 2^{\binom{n}{k}}  $$
The last inequality is equivalent to
$$\sum_{l=C}^n \log(l) \sum_{i=k-\ell}^{k}\binom{n-\ell}{i} < \binom{n}{k},$$
which holds by Lemma \ref{alfa}.
\end{proof}

\begin{proof}[Proof of Lemma \ref{lem:order}]
    For any $f_1:\binom{[n_1]}{k_1}\rightarrow \{0,1\}$ and $f_2:\binom{[n_1+1,n]}{k_2}\rightarrow \{0,1\}$,
    define $f:\binom{[n]}{k}\rightarrow \{0,1\}$ as follows:
    $f(A)=1$ if and only if $|A \cap [n_1]|=k_1$ and $f_1(A\cap [n_1])=f_2(A\cap [n_1+1,n])$.
    The Adversary can let the Questioner know that he is thinking of a set $A$ with $|A \cap [n_1]|=k_1$.
    Then to reveal $f(A)$, the Questioner needs to know both $f_1(A\cap [n_1])$ and $f_2(A\cap [n_1+1,n])$, so $D_{k}(n)\ge D_{k_1}(f_1)+D_{k_2}(f_2)$, which proves the statement if we chose some $f_1$ and $f_2$ that maximize the right-hand side.
\end{proof}

\section{Related discrepancy games}

We tried to find explicit (sequences of) functions $f:\binom{[n]}{n/2}\rightarrow \{0,1\}$ with $E_{n/2}(f)=O(1)$, i.e., $D_{n/2}(f)\ge n-C$ for some constant $C$.
Functions related to discrepancy seemed to be good candidates.
There exists a literature of positional discrepancy games, see \cite{alonetal,friezeetal,hefetal,szekely} and the monograph \cite{beck}, but the games that fit our purposes seem not to have been covered.
Let us mention an explicit function that we studied in detail.
\smallskip

$\textit{Disc-max-}d(A)=1$ if and only if $|A\cap [j]-j/2|\le d/2$ for all $1\le j\le n$.
\smallskip

To study this function, it is more comfortable in this section to use the following notation.
Set $x_i^t=0$ if it has not been queried yet in the first $t$ rounds, $x_i^t=+1$ if $i$ was queried and $i\in A$, and $x_i^t=-1$ if $i$ was queried and $i\notin A$.
Define the \emph{signed} discrepancy of a prefix as $disc^t(j)=\sum_{i=1}^j x_i^t$ and $disc(j)=disc^n(j)$.
With this notation, and renaming the image from $\{0,1\}$ to false and true:
\smallskip

$\textit{Disc-max-}d(A)$ is true if and only if $|disc(j)|\le d$ for all $1\le j\le n$.

\begin{conjecture}
    $E_{n/2}(\textit{Disc-max-}d)=O(1)$ for a suitable constant $d$.
\end{conjecture}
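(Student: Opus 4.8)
The plan is to prove the stronger quantitative statement that for every fixed band width $d$ there is a constant $C=C(d)$ with $D_{n/2}(\textit{Disc-max-}d)\ge n-C$, by exhibiting an explicit Adversary strategy. In the signed-discrepancy language, after the Questioner has made $t$ queries the Adversary has committed signs $x_i\in\{+1,-1\}$ to $t$ positions, and the value is \emph{undetermined} precisely when the committed bits admit two completions of the remaining positions---both using the correct global number $n/2$ of $+1$'s---one whose every prefix sum stays in $[-d,d]$ (value \emph{true}) and one in which some prefix sum leaves $[-d,d]$ (value \emph{false}). It therefore suffices to design answers that keep both a \emph{true-completion} and a \emph{false-completion} alive through the first $n-C-1$ queries, i.e.\ while $C+1$ positions are still unqueried.

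The key structural remark is that a full prefix discrepancy $disc(j)$ becomes permanently fixed only at the moment the Questioner queries the current minimal unqueried position (the \emph{frontier}); queries to later positions merely commit individual signs. The Adversary's strategy would then be to answer each frontier query so as to keep the frontier discrepancy inside a narrow inner window $[-d_0,d_0]$ with $d_0$ much smaller than $d$ (always nudging it back toward $0$), and to answer non-frontier queries by a balancing rule keeping the committed signs globally near-balanced. If this is maintained, every \emph{locked} prefix sum lies well inside the band, and a true-completion survives by continuing to balance the $C+1$ unqueried positions so that the walk stays in $[-d,d]$ and returns to $0$.

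For the false-completion I would use that once $C+1\ge 2d+O(1)$ positions remain and the committed part is near-balanced, the unqueried set $U$ carries about $(C+1)/2\ge d+1$ free $+1$'s and about as many free $-1$'s. The aim is to re-allocate these free signs---flipping roughly $(d+d_0)/2$ of the intended $-1$'s in an early block of $U$ to $+1$ and compensating with equally many flips to $-1$ in a later block---so that at the end of the early block some prefix sum is pushed from its in-band value up past $d$, while the total number of $+1$'s stays $n/2$ and the walk still returns to $0$. Since the Adversary also designs the intended (true) completion, it can try to arrange the intended $U$-signs so that such a configuration of early $-1$'s and late $+1$'s is always available; this would certify that the false value is realizable, hence that the function is undetermined.

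The main obstacle is to make the frontier-balancing strategy provably robust against a \emph{fully adaptive} Questioner while simultaneously respecting the global count $n/2$: forced $\pm1$ answers to far, non-frontier positions accumulate and could eventually either drive the frontier discrepancy out of the inner window or exhaust the supply of free $+1$'s needed to manufacture a spike, and the Questioner can deliberately query exactly the positions where the free signs would be most useful. The heart of a rigorous proof would be a potential function---something like the frontier discrepancy plus a measure of the sign-imbalance among the already-committed far positions---shown to change by $O(1)$ per query and to remain within a window of width $O(d)$ throughout, guaranteeing that both completions stay available until only $C=2d+O(1)$ positions are left. Establishing that this potential never gets stuck is precisely the step we have not been able to carry out, which is why the statement is recorded only as a conjecture.
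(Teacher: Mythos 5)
The statement you are addressing is recorded in the paper as a \emph{conjecture}, and the paper does not prove it; Section 3 only collects partial progress. Your proposal is likewise not a proof: as you say yourself, the decisive step (showing that the potential function ``never gets stuck'') is missing, and that gap is genuine rather than cosmetic. Moreover, it coincides almost exactly with the obstacle the paper identifies. Your plan --- play the Adversary so that a true-completion and a false-completion both survive until only $O(d)$ positions remain unqueried --- is formalized in the paper by Claims \ref{claim:discmax} and \ref{claim:lyukas}, which give sufficient conditions of precisely the kind you want (a true-completion exists if every interval satisfies $|disc^t(i,j)|\le d+unq^t(i,j)-3$; a false-completion exists if $|disc^t(1,j)|\le d+\frac12 unq^t(1,j)$ while $6d+1$ positions are unqueried). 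The Adversary's task of \emph{maintaining} these conditions against an adaptive Questioner is then packaged into a Positioner/Signgiver game whose value $d'(n)=\max_t\max_{i\le j}\left(|disc^t(i,j)|-\frac12 unq^t(i,j)\right)$ the authors cannot bound by a constant; this is exactly the open question they pose.

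Two concrete warnings about your sketch. First, the simplest version of your balancing rule --- keep the prefix discrepancies of the committed signs inside a window of constant width --- provably fails: the paper shows $d(n)\ge\frac13\log n$ by having the Questioner query position $n/2$ and then repeatedly bisect between the nearest pair of oppositely signed committed positions, so no Signgiver strategy keeps all prefix discrepancies of the partially committed array bounded. Any workable potential must therefore exploit the slack contributed by unqueried positions, i.e.\ control $|disc^t(i,j)|-\frac12 unq^t(i,j)$ rather than $|disc^t(j)|$ itself. Second, your ``frontier'' picture understates the difficulty: when the frontier finally advances past a long block of already-committed positions, the single answer at the frontier must simultaneously validate \emph{all} the newly locked prefix sums inside that block, so what you really need to control is the internal discrepancy profile of every interval that might later become fully committed --- which is again the interval game above. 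In short, your route is the paper's route, the missing lemma is precisely the paper's open problem, and the statement remains a conjecture.
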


The above conjecture might hold already for $d=10$.

While we cannot prove the conjecture, we can give some sufficient conditions for the game not to be over yet at some point $t$.

\begin{claim}\label{claim:discmax}\
    \begin{enumerate}
        \item[(i)] For any $d>0$, if $|disc^t(j)|\le \frac d2$ for every $j$ at some point $t$, then $\textit{Disc-max-}d(x)$ being true is possible.
        \item[(ii)] For any $d$, if $|disc^t(j)|\le d$ for every $j$ at some point $t$, and there are yet $3d+1$ unqueried positions, then $\textit{Disc-max-}d(x)$ being false is possible.
    \end{enumerate}
\end{claim}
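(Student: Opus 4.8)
The plan is to pass to the language of \emph{completions}. At time $t$ let $\phi(j)$ denote the number of still-unqueried positions among $1,\dots,j$, and let $a$ and $b$ be the numbers of $+1$'s and $-1$'s that any completion must still place on the unqueried positions. Since the target set lies on the middle slice, $a+b$ equals the total number of unqueried positions, and a short count gives $a-b=-disc^t(n)$. The single most useful observation is that the hypotheses bound $disc^t$ at the endpoint $j=n$, so $|a-b|=|disc^t(n)|$ is at most $d/2$ in part (i) and at most $d$ in part (ii); this near-balance of the two signs drives both arguments. Throughout, a completion agrees with $disc^t$ on queried positions, and $disc(j)=disc^t(j)+F(j)$, where $F(j)$ is the running sum of the $\pm1$'s the completion assigns to unqueried slots of index at most $j$.

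For (i) I would build a good completion explicitly. Because $|a-b|\le d/2$, the $a$ many $+1$'s and $b$ many $-1$'s can be ordered (in increasing index of the unqueried slots) so that every partial sum $F(j)$ stays in $[-d/2,d/2]$: if $a\ge b$, interleave as $+1,-1$ until the $-1$'s are exhausted and then append the remaining $a-b\le d/2$ copies of $+1$, which keeps $F(j)\in[0,a-b]$ (symmetrically if $b>a$). The triangle inequality then gives $|disc(j)|\le|disc^t(j)|+|F(j)|\le d/2+d/2=d$ for every $j$, so this completion witnesses that $\textit{Disc-max-}d$ can still be true.

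For (ii) the goal is the opposite: produce a single completion forcing $|disc(j)|>d$ somewhere. For a fixed index $j$ one computes that the largest and smallest values of $disc(j)$ attainable over all valid completions are $D^+(j)=disc^t(j)+a-|a-\phi(j)|$ and $D^-(j)=disc^t(j)-b+|b-\phi(j)|$, obtained by placing as many $+1$'s, respectively $-1$'s, as the counts allow to the left of $j$. If no completion made the function false, then $D^+(j)\le d$ and $D^-(j)\ge -d$ for every $j$; adding these cancels the unknown $disc^t(j)$ and yields $(a+b)-|a-\phi(j)|-|b-\phi(j)|\le 2d$, i.e. $|a-\phi(j)|+|b-\phi(j)|\ge (a+b)-2d$ for all $j$. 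Since there are at least $3d+1$ unqueried positions, $a+b\ge 3d+1$ and the right-hand side is at least $d+1$. But $\phi$ takes every integer value from $0$ to $a+b$, so I may choose $j$ with $\phi(j)=a$, where the left-hand side equals exactly $|a-b|\le d$ — a contradiction. Hence some completion violates $|disc|\le d$, so false is possible.

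The main obstacle is the balanced regime of (ii): the naive idea of dumping all $+1$'s (or all $-1$'s) onto the leftmost unqueried slots only forces a spike of height $\max(a,b)$, which can be as small as about $3d/2$ and need not beat an adversarial $disc^t$ dipping to $-d$ exactly under the spike. The fix is to abandon a single explicit completion and argue by contradiction through the per-index optima $D^{\pm}(j)$, so that $disc^t(j)$ cancels and the whole question collapses to the clean arithmetic $|a-b|\le d$ versus $a+b\ge 3d+1$ at the one well-chosen index $\phi(j)=a$. Verifying that $D^+$ and $D^-$ are attained by genuine middle-slice completions — the slots to the right of $j$ can always absorb whichever sign is left over — is routine.
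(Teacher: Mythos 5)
Your proof is correct, and it splits cleanly into a part that mirrors the paper and a part that does not. For (i) you reproduce the paper's own construction: interleave the remaining $+1$'s and $-1$'s and push the $|a-b|\le d/2$ surplus to the end. One small imprecision: when $a=b$ the alternating partial sums lie in $\{0,1\}$, not in $[0,a-b]=\{0\}$, so the honest bound is $|disc(j)|\le |disc^t(j)|+\max(1,|a-b|)$; this is still $\le d$ for $d\ge 2$, and for $d=1$ the hypothesis forces $disc^t\equiv 0$, so nothing breaks --- the paper absorbs the same extra $1$ into its $\max(\lfloor d/2\rfloor+1,\,2\lfloor d/2\rfloor)\le d$ estimate. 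For (ii) you take a genuinely different route. The paper argues directly: from $|a-b|\le d$ and $a+b\ge 3d+1$ it gets $\min(a,b)\ge d+1$, lets $j$ be the $(d+1)$-st unqueried position, and stacks $d+1$ copies of whichever sign agrees with the sign of $disc^t(j)$ onto the first $d+1$ unqueried slots, forcing $|disc(j)|\ge d+1$ in one step; the ``adversarial dip under the spike'' you flag as the main obstacle never arises, because the sign of the spike is chosen after inspecting $disc^t(j)$. Your alternative --- contradiction via the extremal completions $D^{\pm}(j)$ and the index with $\phi(j)=a$, where $D^+(j)-D^-(j)\le 2d$ collapses to $2\min(a,b)\le 2d$ against $\min(a,b)\ge d+\tfrac12$ --- is valid and ultimately rests on the same arithmetic fact, but it is nonconstructive and heavier than the two-line construction that suffices here; its modest advantage is that it makes explicit that the only quantity that matters is $\min(a,b)$.
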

\begin{proof}
    (i) So far we have assigned at most $\lfloor \frac d2\rfloor$ more $-1$ values than $+1$ values, as otherwise the discrepancy of the whole interval would not be at most $\frac d2$, and vice versa.
    Without loss of generality, assume that so far we have assigned more $-1$ values.
    Assign the remaining values in an alternating manner, putting the at most $\lfloor \frac d2\rfloor$ `extra' $+1$ values to the end.
    This way the discrepancy is always at most $\max(\lfloor \frac d2\rfloor+1;\lfloor \frac d2\rfloor+\lfloor \frac d2\rfloor)\le d$.

    \textit{Remark.} This is best possible, as shown by the state where $x_i^t=-1$ if $i\le \lfloor \frac d2\rfloor+1$, $x_i^t=0$ if $\lfloor \frac d2\rfloor+2\le i\le d+1$, $x_i^t=+1$ if $d+2\le i\le 2d+2$ and $x_i^t=(-1)^i$ if $2d+2<i$.

    (ii) So far we have assigned at most $d$ more $-1$ values than $+1$ values, as otherwise the discrepancy of the whole interval would not be at most $d$, and vice versa.
    In particular, there are at least $d+1$ more $-1$'s and at least $d+1$ more $+1$'s we can assign.
    Let $j$ be the index of the $(d+1)$-st hitherto unqueried position.
    Without loss of generality, assume $disc^t(j)\ge 0$.
    If we assign $+1$'s to the first $d+1$ unqueried positions, then $|disc(j)|>d$, so $\textit{Disc-max-}d(x)$ is false.
\end{proof}

Now, we introduce a two-player game to bound $E_{n/2}(\textit{Disc-max-}d)$; this game is more similar to typically studied discrepancy questions.
As usual, the board is an array $x$ of length $n$ that has only 0 entries at the beginning of the game, i.e., $x^0_i=0$ for all $i$. In the $t$-th round, a player called \textit{Positioner} asks an index $i$ with $x^{t-1}_i=0$ for some $1\le i\le n$, and then a player called \textit{Signgiver}, as an answer, sets $x^t_i=+1$ or $x^t_i=-1$.
The values of the other indices remain unchanged, i.e., $x^t_j=x^{t-1}_j$ for $j\ne i$.
The goal of Positioner is to make the discrepancy $|disc^t(j)|$ big over some prefix $[j]$ at some time $t$, while the goal of Signgiver is to keep it low.
Denote the value of this game by $d(n)$, that is, $\max_t \max_j |disc^t(j)|$ during an optimal run of this game.
A simple corollary of Claim \ref{claim:discmax} is the following.

\begin{corollary}
    For $d=2d(n)$, we have $E_{n/2}(\textit{Disc-max-}d)\le 3d$.
\end{corollary}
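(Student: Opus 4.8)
The plan is to read the statement as a query-complexity lower bound and to have the Adversary answer the Questioner's queries by simulating the optimal Signgiver strategy. Since $E_{n/2}(\textit{Disc-max-}d)=n-D_{n/2}(\textit{Disc-max-}d)$, it suffices to show $D_{n/2}(\textit{Disc-max-}d)\ge n-3d$; equivalently, that the Questioner cannot determine the value of $\textit{Disc-max-}d$ after any sequence of at most $n-3d-1$ queries, i.e.\ while at least $3d+1$ positions remain unqueried.

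For the Adversary strategy, I would identify a query ``is $i\in A$?'' with the Positioner choosing index $i$, and let the Adversary respond exactly as the optimal Signgiver of the game defining $d(n)$: the answer $i\in A$ sets $x_i=+1$ and $i\notin A$ sets $x_i=-1$. By the definition of $d(n)$ as the value of that game, this guarantees $|disc^t(j)|\le d(n)$ for every prefix $[j]$ and every round $t$, no matter which indices the Questioner queries. Since $d=2d(n)$, this is precisely $|disc^t(j)|\le d/2$ at all times.

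The key step is then to apply Claim \ref{claim:discmax} at any round $t$ at which at least $3d+1$ positions are still unqueried. Because $|disc^t(j)|\le d/2$ for every $j$, part (i) shows the partial assignment can be completed so that $\textit{Disc-max-}d$ is true; because also $|disc^t(j)|\le d/2\le d$ for every $j$ and at least $3d+1$ positions remain, part (ii) shows it can be completed so that $\textit{Disc-max-}d$ is false. Both completions produced in the proof of the Claim are balanced, hence lie in the middle slice $\binom{[n]}{n/2}$, and both are consistent with every answer given so far. Thus the value of the function is not yet determined and the Questioner must continue.

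The only point requiring care, and the main potential obstacle, is to verify that the Adversary's partial answers are at every stage extendable to a genuine element of the middle slice, which is what legitimizes the two applications of the Claim. Writing $p$ and $m$ for the numbers of $+1$ and $-1$ answers given after $t\le n-3d-1$ rounds, the bound $|disc^t(n)|=|p-m|\le d/2$ together with $p+m=t<n$ forces $\max(p,m)<n/2$, so there is always room to complete the assignment to exactly $n/2$ ones, exactly as the constructions inside Claim \ref{claim:discmax} do. Combining these observations, the game cannot end while $3d+1$ or more positions are unqueried, whence $D_{n/2}(\textit{Disc-max-}d)\ge n-3d$ and therefore $E_{n/2}(\textit{Disc-max-}d)\le 3d$, as claimed.
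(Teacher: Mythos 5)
Your proposal is correct and follows essentially the same route as the paper: the Adversary simulates the optimal Signgiver to keep all prefix discrepancies at most $d(n)=d/2$, and then Claim \ref{claim:discmax}(i) and (ii) show that both outcomes remain possible while at least $3d+1$ positions are unqueried. Your additional verification that the partial answers always extend to a set in the middle slice is a detail the paper leaves implicit, but it does not change the argument.
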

\begin{proof}
    We play as Signgiver, answering to keep the discrepancy of the prefixes at most $d(n)$.
    If there are at least $3d+1$ unknown values, then both outcomes are still possible by Claim \ref{claim:discmax}.
\end{proof}

Therefore, if we could prove a constant bound on $d(n)$, that would also give a universal bound on $E_{n/2}(\textit{Disc-max-}d)$ for some constant $d$.
However, this is not possible.

\begin{claim}
    $\frac13\log n\le d(n)\le \frac32\sqrt n$.
\end{claim}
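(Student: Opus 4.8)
The plan is to treat the two inequalities as two independent strategy-design problems. For the upper bound $d(n)\le \tfrac32\sqrt n$ I would fix a threshold $B$ and have Signgiver play to maintain the invariant $|disc^t(j)|\le B$ for all $j$ and all $t$; it then suffices to show that Signgiver is never forced to break it on an array of length $n$ once $B\ge\tfrac32\sqrt n$. The first step is the forcing analysis: when Positioner queries an index $i$, Signgiver is forced past $B$ precisely when there are two indices $j,j'\ge i$ with $disc^{t-1}(j)=B$ and $disc^{t-1}(j')=-B$, since then $x_i=+1$ pushes the first prefix to $B+1$ and $x_i=-1$ pushes the second to $-(B+1)$. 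The second step exploits that the profile $j\mapsto disc^t(j)$ is a walk with steps in $\{-1,0,+1\}$: a profile attaining both $+B$ and $-B$ must pass through every integer in $[-B,B]$, so Signgiver can try to forbid this configuration by controlling a potential such as $\Phi^t=\sum_j(disc^t(j))^2$, for which the energy-minimizing response $x_i^t=-\operatorname{sign}\bigl(\sum_{j\ge i}disc^{t-1}(j)\bigr)$ kills the linear term at each query. A spanning profile carries energy $\Omega(B^3)$, while the accumulated energy is polynomially bounded in $n$; balancing the two yields a bound of the right shape, and the delicate part is the book-keeping that turns this into the clean constant $\tfrac32$ (keeping the walk centred, so that $\max_j|disc^t(j)|$ is governed by the range of the profile rather than by a one-sided drift).

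For the lower bound $d(n)\ge\tfrac13\log n$ I would give Positioner a recursive strategy built around what I will call a \emph{double-$D$ configuration}: a moment at which two prefixes lying to the right of some still-unqueried index $i$ have discrepancies $\ge +D$ and $\le -D$. The point of such a configuration is that querying $i$ immediately forces $\max_j|disc^t(j)|\ge D+1$, whichever sign Signgiver chooses. Let $L(D)$ be the least length on which Positioner can force a double-$D$ configuration; then $d(n)\ge D+1$ as soon as $n\ge L(D)+1$, so it is enough to prove $L(D+1)\le c\,L(D)$ for an absolute constant $c$, after which $d(n)\ge\log_c n$ (the stated constant corresponds to $c=8$). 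The inductive step runs several disjoint copies of a length-$L(D)$ gadget, each with reserved unqueried trigger positions placed to its left, and then fires the triggers so that each excursion is pushed out to $\pm(D+1)$; a pigeonhole over the copies is used to guarantee that a $+(D+1)$ excursion and a $-(D+1)$ excursion are present simultaneously, and to the right of one further reserved index, which is exactly a double-$(D+1)$ configuration.

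The step I expect to be the main obstacle is the sign control in this inductive step. Because Signgiver commits the sign of every queried bit, it will try to steer every excursion to the same sign, in which case nothing accumulates and the recursion collapses; the whole lower bound rests on forcing both a positive and a negative excursion to coexist at one time, and it is precisely the cost of this fork -- how many independent copies are needed to defeat Signgiver's freedom of sign -- that determines the constant $c$ and hence the constant $\tfrac13$. On the upper-bound side the analogous difficulty is quantitative rather than conceptual: the crude energy comparison only gives an $n^{2/3}$-type bound, and extracting the true $\sqrt n$ behaviour with the constant $\tfrac32$ requires the sharper, centred invariant indicated above.
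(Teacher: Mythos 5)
Both halves of your plan stop short of a proof, and in both cases the missing step is the whole difficulty. For the upper bound, your own accounting shows the energy argument fails: with the potential $\Phi^t=\sum_j(disc^t(j))^2$ and the sign choice that kills the linear term, you get $\Phi^t\le\Phi^{t-1}+n$, hence $\Phi\le n^2$, and comparing with the $\Omega(B^3)$ energy of a profile spanning $[-B,B]$ yields only $B=O(n^{2/3})$. The ``sharper, centred invariant'' that is supposed to rescue $\sqrt n$ is never specified, so the bound $\frac32\sqrt n$ is not established. The paper's Signgiver strategy is far more elementary and has nothing to do with potentials: partition $[n]$ into $\sqrt n$ blocks of length $\sqrt n$ and answer the queries \emph{within each block} in alternation. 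Then every completed block contributes $0$ or $1$ to any prefix sum, and the partially covered block contributes at most $\frac12\sqrt n$, giving $|disc^t(j)|\le\sqrt n\cdot 1+\frac12\sqrt n=\frac32\sqrt n$ directly. Note also that your ``forced precisely when'' characterization describes only a one-move loss; ruling out a loss over the whole game requires an invariant Signgiver can actually maintain, which is exactly what the block strategy provides and your sketch does not.

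For the lower bound, the recursion $L(D+1)\le c\,L(D)$ is asserted but the inductive step --- forcing a $+(D+1)$ excursion and a $-(D+1)$ excursion to coexist --- is left open, and you correctly identify it as the main obstacle: Signgiver can steer every disjoint copy to the same sign, and no pigeonhole over identical copies defeats that. The paper avoids the issue entirely with a single binary search. Pretend $x_0=+1$ and $x_{n+1}=-1$, query the midpoint of the current interval bounded by the nearest $+1$/$-1$ pair, and recurse into the half whose endpoints still have opposite signs. The nested intervals guarantee that among the $\log n$ queried positions all the $+1$'s lie to the left of all the $-1$'s. If at least $\frac13\log n$ answers were $+1$, the prefix ending at the rightmost $+1$ already has discrepancy $\ge\frac13\log n$; otherwise there are at least $\frac23\log n$ answers $-1$ and the full prefix has discrepancy $\le\frac13\log n-\frac23\log n=-\frac13\log n$. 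This is a one-shot argument with no recursion and no sign-control lemma to prove. As it stands, your proposal is a research plan with two acknowledged gaps rather than a proof of either inequality.
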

\begin{proof}
     To prove the upper bound, Signgiver may partition the index set $[n]$ into $\sqrt{n}$ intervals $I_1,I_2,\dots,I_{\sqrt{n}}$, each of length $\sqrt{n}$, and in each $I_h$ assign signs to the queries of Positioner in an alternating manner.
    Then the discrepancy over any complete $I_h$ is always 0 or 1, and within an interval $I_h$ the discrepancy is at most $\frac{1}{2}\sqrt{n}$.
    Thus, for any prefix $[1,j]$ at any time $t$, we have $disc_t(j)\le \sqrt{n}\cdot 1+ \frac{1}{2}\sqrt{n}=\frac32\sqrt{n}$.

    To prove the lower bound, we imagine that (the non-existent) $0$-th and $(n+1($-st positions have value $+1$ and $-1$, i.e., $x_0=+1$ and $x_{n+1}=-1$.
    First, we query $n/2$.
    If $x_{n/2}=-1$, next Positioner can query $x_{n/4}$, while if  $x_{n/2}=+1$, next Positioner queries $x_{3n/4}$.
    Positioner continues asking this way always the index between the nearest two opposite values.
    Positioner can do this for $\log n$ steps, after which the two nearest indices become adjacent.
    If at this stage, we have $\frac13\log n$ of $+1$ values in some prefix, then $d(n)\ge disc^t(j)\ge\frac13\log n$ for some $j$.
    Otherwise, we have at most $\frac13\log n$ of $+1$ values and at least $\frac23\log n$ of $-1$ values, so $d(n)\ge -disc^t(n)\ge\frac23\log n-\frac13\log n=\frac13\log n$.
\end{proof}

As this approach thus cannot give a constant bound on $E_{n/2}(\textit{Disc-max-}d)$, we prove a modified version of Claim \ref{claim:discmax} next, before which we introduce some new notation:\\

$disc^t(i,j)=\sum_{h=i}^j x^t_h$ \hspace{1cm} and \hspace{1cm}
$unq^t(i,j)=|\{i\le h\le j: x^t_h=0\}|$.\\

Note that instead of prefixes, now we consider intervals---this is because otherwise Claim \ref{claim:lyukas}/(i') would not hold.

\begin{claim}\label{claim:lyukas}\
    \begin{enumerate}
        \item[(i')] For any $d>0$, if $|disc^t(i,j)|\le d+unq^t(i,j)-3$ for every $i,j$ at some point $t$, then $\textit{Disc-max-}d(A)$ being true is possible.
        \item[(ii')] For any $d$, if $|disc^t(1,j)|\le d+\frac12 unq^t(1,j)$ for every $j$ at some point $t$, and there are yet $6d+1$ unqueried positions, then $\textit{Disc-max-}d(A)$ being false is possible.
    \end{enumerate}
\end{claim}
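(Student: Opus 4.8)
The plan is to treat both parts as completion problems for the partially filled array: we must extend the current assignment of $\pm 1$ to the positions with $x_i^t=0$, where for (i') we keep every prefix discrepancy in $[-d,d]$ and for (ii') we force some prefix discrepancy outside it. As in Claim \ref{claim:discmax}, the two halves are handled by opposite constructions, but now the bookkeeping is by intervals and by the exact counts of remaining $+1$'s and $-1$'s.

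For (i') I would complete \emph{greedily}: scanning $h=1,\dots,n$, assign to each unqueried position the sign opposite to the current prefix discrepancy $disc(h-1)$ (breaking the tie at $disc(h-1)=0$ in favour of $+1$). I claim no prefix leaves $[-d,d]$. Suppose the first violation is $disc(j)=d+1$ (the negative case is symmetric). Let $m<j$ be the last index with $disc(m)\le 0$; a short check shows $disc(m)=0$ and $disc(h)\ge 1$ throughout $[m+1,j]$, so the greedy rule assigns $-1$ to every unqueried position of $[m+1,j]$ except possibly the leftmost one. Hence the free positions contribute at most $2-unq^t(m+1,j)$ to the interval, while $disc^t(m+1,j)$ plus this contribution equals $disc(j)-disc(m)=d+1$; rearranging gives $disc^t(m+1,j)\ge d+unq^t(m+1,j)-1$, contradicting the hypothesis $|disc^t(i,j)|\le d+unq^t(i,j)-3$. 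The slack $-3$ is exactly what absorbs the single boundary position that may carry the ``wrong'' sign.

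For (ii') let $P$ and $M$ be the numbers of $+1$'s and $-1$'s still to be placed, so that $P+M=unq^t(1,n)=:U\ge 6d+1$ and, since $|A|=n/2$ forces $disc(n)=0$, also $M-P=disc^t(1,n)$. I would consider the two extreme completions: \emph{ones-first}, placing $+1$ on the $P$ smallest unqueried indices, and \emph{zeros-first}, placing $-1$ on the $M$ smallest ones; these simultaneously maximize resp. minimize every prefix sum. Suppose neither produces a violation. At the prefix $[1,j_0]$ containing exactly $\min(P,M)$ unqueried positions, both completions give the clean values $disc^t(1,j_0)\pm\min(P,M)$, so the two no-violation inequalities read $disc^t(1,j_0)+\min(P,M)\le d$ and $disc^t(1,j_0)-\min(P,M)\ge -d$, which combine to $\min(P,M)\le d$. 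Then $|disc^t(1,n)|=|M-P|=U-2\min(P,M)\ge U-2d$, while the hypothesis at $j=n$ gives $|disc^t(1,n)|\le d+\tfrac12 U$; together these force $U\le 6d$, contradicting $U\ge 6d+1$. Hence one of the two completions makes some prefix exceed $d$ in absolute value, i.e.\ $\textit{Disc-max-}d(A)$ false is possible.

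The main obstacle in (i') is the boundary accounting: one must verify that along the critical interval at most one unqueried position is forced by the greedy rule to the sign that increases the discrepancy, which is precisely what the constant $3$ is tuned to. In (ii') the delicate point is the legality of the two extreme completions under the global balance $|A|=n/2$; the identities $P+M=U$ and $M-P=disc^t(1,n)$ are exactly what link the prefix hypothesis at $n$ to the threshold $6d+1$, and confirming that the crossover prefix $[1,j_0]$ is realized (the counts $unq^t(1,j)$ increase in steps of $0$ or $1$, hence take every value up to $U$) is the remaining routine verification.
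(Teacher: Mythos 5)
Your part (ii') is essentially the paper's own argument, just phrased in contrapositive form: the paper computes directly that at least $d+1$ copies of each sign remain to be placed and loads $d+1$ copies of the majority sign onto the earliest unqueried positions, while you compare the two extreme balanced completions and deduce $\min(P,M)\ge d+1$ from $U\ge 6d+1$ together with the hypothesis at $j=n$. The counting is the same, and you are in fact more explicit about why the completions are legal under the constraint $|A|=n/2$. Part (i') is where you genuinely diverge. The paper fixes the unqueried positions one at a time \emph{in an arbitrary order}, maintaining the invariant that every ``even interval'' $[i,j]$ (with $i$ odd, $j$ even) satisfies $|disc(i,j)|\le (d-2)+unq(i,j)$; the core of their proof is that no position can lie in both a strict positive and a strict negative even interval, shown by splitting $I\cup J$ into three even intervals, and they need a separate reduction for odd $d$. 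Your left-to-right greedy with the ``first violation / last zero of the running sum'' analysis is shorter, avoids all parity casework, and actually uses the hypothesis only with slack $-2$ rather than $-3$: you derive $disc^t(m+1,j)\ge d+unq^t(m+1,j)-1$, which already contradicts $|disc^t(i,j)|\le d+unq^t(i,j)-2$. (In the negative-violation case your greedy assigns $+1$ to \emph{every} unqueried position of the critical interval, so the contradiction is even stronger there; the boundary accounting is sound.) What the paper's formulation buys is order-robustness of the completion, which the claim as stated does not require.

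One caveat: your greedy completion in (i') need not contain exactly $n/2$ ones, so the resulting assignment need not correspond to a legal input $A\in\binom{[n]}{n/2}$, which is what ``$\textit{Disc-max-}d(A)$ being true is possible'' must mean for the corollary that uses this claim. To be fair, the paper's own proof of (i') has exactly the same omission (unlike its proof of Claim \ref{claim:discmax}(i), which explicitly rebalances by appending the extra $+1$'s at the end), so I do not count this against you relative to the paper; but a fully rigorous version of either argument would have to track the remaining budgets $P$ and $M$, as you do in (ii'), and show the prefix bounds survive the rebalancing.
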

\begin{proof}
    (i')
    Let us first assume that $d$ is even and let $d'=d-2$. In this case actually assuming $|disc^t(i,j)|\le d+unq^t(i,j)-2=d'+unq^t(i,j)$ is already enough.

    Let us call an interval $(i,j)$ \emph{even} if $i$ is odd and $j$ is even. What we actually prove is that if $|disc^t(i,j)|\le d'+unq^t(i,j)$ for every even interval (such an even interval is called \emph{good}), then we can choose values for the unqueried positions in an arbitrary order such that every even interval remains good in every step. This way at the end, when there are no unqueried positions left, every even interval must have $|disc^t(i,j)|\le d'$ and so every interval has $|disc^t(i,j)|\le d'+2=d$, as claimed.

     Call an even interval $(i,j)$ \emph{strict} if $|disc^t(i,j)|= d'+unq^t(i,j)$. A strict even interval is \emph{positive}, if $disc^t(i,j)>0$ and is negative if $disc^t(i,j)<0$.
     Note that due to parity, an even interval is either strict or has $|disc^t(i,j)|\le  d'+unq^t(i,j)-2$ (this is where we use that $d$ is even and that the intervals have even length). Let $p$ be an unqueried position. Whatever value we choose for $p$, every non-strict even interval remains good.
     Consider the strict intervals that contain $p$. If all of them are positive or all of them are negative, then we can choose the opposite value for $p$ and they remain good.

     Assume now that $p$ is contained in a strict positive interval $I$ and a strict negative interval $J$ as well; we shall reach a contradiction. The intervals $I$ and $J$ are either intersecting or one contains the other; in both cases, the endpoints of $I$ and $J$ split $I\cup J$ into $3$ even intervals, some of them possibly empty: $K_1,K_2,K_3$ (this is where even length intervals are not enough, and we need even intervals), where $I\cap J=K_2$. Using that $I$ and $J$ are strict positive and negative, respectively, and that these $3$ intervals are good, we get the following (in)equalities:
     $$disc(I)=|disc(I)|= d'+unq(I)$$
     $$-disc(J)=|disc(J)|= d'+unq(J)$$
     $$|disc(K_1)|\le d'+unq(K_1)$$
     $$|disc(K_3)|\le d'+unq(K_3)$$
     $$disc(I)-disj(J)=\pm disc(K_1)\pm disc(K_3)\le |disc(K_1)|+|disc(K_3)|,$$
     where in the last line the sign of the $\pm's$ depends on how $I$ and $J$ intersect.\\
     The first two equations imply that $$disc(I)-disj(J)=2d'+unq(I)+unq(J)=2d'+unq(K_1)+unq(K_3)+2unq(K_2).$$
     On the other hand, the last three inequalities imply that
     $$disc(I)-disj(J)\le |disc(K_1)|+|disc(K_3)|\le 2d'+unq(K_1)+unq(K_3).$$
     These two can hold together only if $unq(K_2)=0$, yet $p$ is an unqueried position in $K_2$, a contradiction.

    The proof is finished for the case when $d$ is even. Recall that we just used (and maintained) that $|disc^t(i,j)|\le  d+unq^t(i,j)-2$ for every even interval, which is one more than what we actually assume in $(v)$.

    So let us assume now that $d$ is odd. In this case we can apply the even case for the even number $d-1$: the initial assumption holds, as
    $|disc^t(i,j)|\le  (d-1)+unq^t(i,j)-2$ and so we can maintain this in every step to get that at the end for every even interval $|disc^t(i,j)|\le (d-1)-2$ and so every interval has $|disc^t(i,j)|\le (d-1)-2+2=d-1<d$, finishing the case when $d$ is odd.

    (ii') Introduce $u=unq^t(1,n)$.
    So far we have assigned at most $d+\frac u2$ more $-1$ values than $+1$ values, as otherwise $disc^t(1,j)\le d+\frac u2$  would not hold, and vice versa.
    In particular, there are at least $\lceil\frac{u-(d+\frac u2)}2\rceil\ge d+1$ more $-1$'s and at least $d+1$ more $+1$'s we can assign.
    Let $j$ be the index of the $(d+1)$-st hitherto unqueried position.
    Without loss of generality, assume $disc^t(1,j)\ge 0$.
    If we assign $+1$'s to the first $d+1$ unqueried positions, then $|disc(1,j)|>d$, so $\textit{Disc-max-}d(x)$ is false.
\end{proof}

Now, we introduce slightly different two-player game to bound $E_{n/2}(\textit{Disc-max-}d)$.
As before, the board is an array $x$ of length $n$ that has only 0 entries at the beginning of the game, i.e., $x^0_i=0$ for all $i$. In the $t$-th round, a player called \textit{Positioner} asks an index $i$ with $x^{t-1}_i=0$ for some $1\le i\le n$, and then a player called \textit{Signgiver}, as an answer, sets $x^t_i=+1$ or $x^t_i=-1$.
The values of the other indices remain unchanged, i.e., $x^t_j=x^{t-1}_j$ for $j\ne i$.

In this new game, the goal of Positioner is to make $|disc^t(i,j)|-\frac12 unq^t(i,j))$ big over some interval $[i,j]$ at some time $t$, while the goal of Signgiver is to keep it low.
Denote the value of this game by $d'(n)$, that is, $\max_t\max_{1\le i\le j\le n}( |disc^t(i,j)|-\frac12 unq^t(i,j))$ during an optimal run of this game.

Obviously, $d'(n)\le 2d(n)$.
A simple corollary of Claim \ref{claim:lyukas} is the following.

\begin{corollary}
    For $d=d'(n)+3$, we have $E_{n/2}(\textit{Disc-max-}d)\le 6d$.
\end{corollary}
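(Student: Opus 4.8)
The plan is to mirror the earlier corollary (the one for $d=2d(n)$): I play the query game as the Adversary by simulating an optimal Signgiver in the game whose value is $d'(n)$, and then invoke the two halves of Claim~\ref{claim:lyukas} to show that neither value of the function can be forced while enough positions remain unqueried. Since $E_{n/2}(\textit{Disc-max-}d)=n-D_{n/2}(\textit{Disc-max-}d)$, it suffices to prove that the Adversary can survive until at most $6d$ positions are left, i.e.\ that the game is not over while at least $6d+1$ positions are unqueried.

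First I would set up the translation between the two games: a query of the Questioner asking whether $i\in A$ is exactly a Positioner move asking index $i$, and the Adversary's answer $i\in A$ / $i\notin A$ corresponds to the Signgiver setting $x^t_i=+1$ / $x^t_i=-1$. So I let the Adversary answer according to an optimal Signgiver strategy for the $d'(n)$-game. By the definition of $d'(n)$ as $\max_t\max_{1\le i\le j\le n}\bigl(|disc^t(i,j)|-\tfrac12 unq^t(i,j)\bigr)$, this strategy guarantees, at every time $t$ and for every interval $[i,j]$,
$$|disc^t(i,j)|\le d'(n)+\tfrac12 unq^t(i,j).$$

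Next I would check that while at least $6d+1$ positions are unqueried, with $d=d'(n)+3$, both outcomes remain possible. For \emph{true}, I apply Claim~\ref{claim:lyukas}(i'): its hypothesis asks $|disc^t(i,j)|\le d+unq^t(i,j)-3$ for all $i,j$, and since $d-3=d'(n)$ and $\tfrac12 unq^t(i,j)\le unq^t(i,j)$, the maintained bound yields $|disc^t(i,j)|\le d'(n)+\tfrac12 unq^t(i,j)\le d'(n)+unq^t(i,j)=d+unq^t(i,j)-3$, as required. For \emph{false}, I apply Claim~\ref{claim:lyukas}(ii'): its hypothesis asks $|disc^t(1,j)|\le d+\tfrac12 unq^t(1,j)$ for all $j$ together with at least $6d+1$ unqueried positions, and the maintained bound at $i=1$ gives $|disc^t(1,j)|\le d'(n)+\tfrac12 unq^t(1,j)\le d+\tfrac12 unq^t(1,j)$ because $d'(n)\le d$. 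Thus the answers so far are consistent both with a completion making $\textit{Disc-max-}d(A)$ true and with one making it false, so the Questioner cannot determine the value and the game continues; hence $D_{n/2}(\textit{Disc-max-}d)\ge n-6d$, giving the claimed bound.

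The one point that needs care—and the main obstacle—is that the Signgiver game is "free," whereas the Adversary must keep its answers extendable to a genuine middle-slice set (exactly $n/2$ ones). Writing $a,b$ for the numbers of already assigned $+1$'s and $-1$'s and $u$ for the number of unqueried positions, extendability to the middle slice is equivalent to $|a-b|=|disc^t(1,n)|\le u$. This is exactly where I would use that the maintained bound carries the slack $\tfrac12 unq$ rather than $unq$: it gives $|a-b|\le d'(n)+\tfrac12 u$, and for $u\ge 6d+1=6d'(n)+19$ one has $d'(n)\le \tfrac12 u$, so $|a-b|\le u$ and both $+1$'s and $-1$'s can still reach $n/2$. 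Since Claim~\ref{claim:lyukas} already supplies the required middle-slice completions, no further work is needed beyond this arithmetic verification.
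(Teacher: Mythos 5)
Your proof is correct and follows essentially the same route as the paper's: play the Adversary by simulating an optimal Signgiver for the $d'(n)$-game, so that $|disc^t(i,j)|\le d'(n)+\tfrac12 unq^t(i,j)$ is maintained, and then invoke Claim~\ref{claim:lyukas}(i') and (ii') to conclude that both outcomes remain possible while at least $6d+1$ positions are unqueried. Your additional verification that the answers remain extendable to a genuine middle-slice set is a sensible explicitness check (the paper leaves it implicit in the claims' completions), but it does not change the argument.
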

\begin{proof}
    We play as Signgiver, answering to keep $\max_t\max_{1\le i\le j\le n}( |disc^t(i,j)|-\frac12 unq^t(i,j))$ at most $d'(n)$.
    If there are at least $6d+1$ unknown values, then both outcomes are still possible by Claim \ref{claim:lyukas}.
\end{proof}

Therefore, if we could prove a constant bound on $d'(n)$, that would imply that $\textit{Disc-max-}d$ is an explicit function with a bounded $E_{n/2}$.
However, we could not prove such a bound, so we leave this open.

\begin{question}
How much is $d'(n)$?
\end{question}

We posed the more natural question of determining $d(n)$ on MathOverflow\footnote{\url{https://mathoverflow.net/q/446197/955}} but we did not receive any answers.

\bigskip
{\bf Acknowledgements.}
We would like to thank Farzan Byramji for discussions and Kristóf Zólomy for calling our attention to an error in an earlier version of our paper.

\smallskip
\textbf{Funding}: Research supported by the National Research, Development and Innovation Office - NKFIH under the grants FK 132060, PD 137779, KKP-133819, K 132696.

Research of B. Keszegh, D.T. Nagy and D. Pálvölgyi was also supported by the J\'anos Bolyai Research Scholarship of the Hungarian Academy of Sciences.

Research of B. Keszegh and D. Pálvölgyi was also supported by the ERC Advanced Grant ``ERMiD'' and by the New National Excellence Program \'UNKP-23-5 and by the Thematic Excellence Program TKP2021-NKTA-62 of the National Research, Development and Innovation Office.

Research of G. Wiener was supported by the  Research, Development and Innovation Fund, financed under the TKP2021 (project no. BME-NVA-02) funding scheme.



\end{document}